\tikzset{
	white-vertex/.style={
		circle,
		fill=none,
		scale=0.5,
		font=\sffamily\bfseries\large,
	}
}
\tikzset{
	white-vertex-border/.style={
		circle,
		fill=black,
		scale=0.05,
		font=\sffamily\bfseries\large,
	}
}
\tikzset{
	vertex/.style={
		circle,
		fill=black!7.5!white,
		draw=black,
		line width=0.25pt,
		scale=0.6,
		font=\sffamily\bfseries\small,
		minimum size=1.5em, % Adjust this to your preference
		inner sep=0pt, % This ensures padding inside node is zero
	}
}
\tikzset{
	black-vertex/.style={
		circle,
		fill=black,
		scale=0.5,
		font=\sffamily\bfseries\large,
		text=white,
	}
}
\tikzset{
	blue-vertex/.style={
		circle,
		fill=blue!30,
		draw=blue!70!black,
		line width=1.5pt,
		scale=0.5,
		%xscale=1,
		%yscale=0.8,
	}
}
\tikzset{
	red-vertex/.style={
		rectangle,
		fill=red!30,
		draw=red!70!black,
		line width=1.5pt,
		scale=0.5,
		minimum width=0.1cm,
		minimum height=0.1cm,
	}
}
\tikzset{
	green-vertex/.style={
		circle,
		fill=green!30,
		draw=green!70!black,
		line width=1.5pt,
		scale=0.5,
	}
}
\tikzstyle{black-edge}=[-, draw=black]
\tikzstyle{blue-arc}=[->, draw=blue]
\tikzstyle{teal-arc}=[->, draw=teal]
\tikzstyle{cyan-arc}=[->, draw=cyan]
\tikzstyle{darkgray-arc}=[->, draw=darkgray, dotted]
\tikzstyle{gray-arc}=[->, draw=gray, dashed]
\tikzstyle{lightgray-arc}=[->, draw=lightgray]
\tikzstyle{dotted-edge}=[-, dotted]
\tikzstyle{densely-dotted-edge}=[-, densely dotted]
\tikzstyle{loosely-dotted-edge}=[-, loosely dotted]
\tikzstyle{dashed-edge}=[-, dashed]
\tikzstyle{densely-dashed-edge}=[-, densely dashed]
\tikzstyle{loosely-dashed-edge}=[-, loosely dashed]
\tikzstyle{arc}=[->, draw=purple!70, line width=1.5pt]
\tikzstyle{green-densely-dotted-arc}=[->, draw=green!50!black, line width=0.75pt]
\tikzstyle{red-loosely-dashed-arc}=[->, densely dashed, draw=red!50!black, line width=0.5pt]
\tikzset{
	densely-dotted-edge/.style={
		black, % Set the color of the edge (e.g., red!70)
		line width=0.5pt, % Set the thickness of the edge
		-, % Set the arrow style for the edge (-> for directed, - for undirected)
		densely dotted,
	}
}
\tikzset{
	blue-arc/.style={
		blue!50!black, % Set the color of the edge (e.g., red!70)
		line width=0.25pt, % Set the thickness of the edge
		->, % Set the arrow style for the edge (-> for directed, - for undirected)
	}
}
\tikzset{
	red-arc/.style={
		red!50!black, % Set the color of the edge (e.g., red!70)
		line width=0.5pt, % Set the thickness of the edge
		->, % Set the arrow style for the edge (-> for directed, - for undirected)
		densely dotted,
	}
}
\tikzset{
	yellow-arc/.style={
		yellow!50!black, % Set the color of the edge (e.g., red!70)
		line width=1pt, % Set the thickness of the edge
		->, % Set the arrow style for the edge (-> for directed, - for undirected)
	}
}
\tikzstyle{black-arc}=[->, draw=black, line width=0.5pt]
\tikzset{
	green-arc/.style={
		green!50!black, % Set the color of the edge (e.g., red!70)
		line width=0.5pt, % Set the thickness of the edge
		->, % Set the arrow style for the edge (-> for directed, - for undirected)
		densely dashed,
	}
}
\tikzstyle{purple-arc}=[->, thick, draw=purple, thick,dash dot]
\tikzstyle{orange-arc}=[->, draw=orange, ultra thick]
\tikzstyle{brown-arc}=[->, draw=brown,  ultra thick]
\definecolor{lightslategray}{rgb}{0.47, 0.53, 0.6}
\tikzstyle{none}=[inner sep=0pt]
\definecolor{hexcolor0xf81e1c}{rgb}{0.973,0.118,0.110}
\definecolor{hexcolor0x3c00ff}{rgb}{0.235,0.000,1.000}
\tikzstyle{whitevertex}=[circle,fill=white,draw=black, scale = 0.7]
\tikzstyle{redvertex}=[circle,fill=hexcolor0xf81e1c,draw=black, scale = 0.7]
\tikzstyle{bluevertex}=[circle,fill=hexcolor0x3c00ff,draw=black, scale = 0.7]
\tikzstyle{greenvertex}=[circle,fill=green,draw=black, scale=0.7]
\tikzstyle{purplevertex}=[circle,fill=magenta,draw=black, scale=0.7]
\tikzstyle{grayvertex}=[circle,fill=white,draw=gray, scale=0.7]
\tikzstyle{blackvertex}=[circle,fill=black,draw=black, scale=0.7]
\tikzstyle{textbox}=[rectangle,fill=none,draw=none]
\tikzstyle{box}=[rectangle,fill=none,draw=black]
\tikzstyle{grayedge}=[draw=gray]
\tikzstyle{blueedge}=[draw=blue]
\tikzstyle{rededge}=[draw=red]
\tikzstyle{edge}=[draw=black]
\newtheorem{theorem}{Theorem}
\newtheorem{claim}[theorem]{Claim}
\newtheorem{lemma}[theorem]{Lemma}
\newtheorem{proposition}[theorem]{Proposition}
\newtheorem{ques}{Question}
\newtheorem{problem}[ques]{Problem}
\newtheorem{obs}[theorem]{Observation}
\begin{document}

\title{\textbf{Bounding the Eviction Number}\\\textbf{of a Graph in Terms of its}\\\textbf{Independence Number\thanks{This paper is based on work that appears in
the doctoral dissertation\textquotedblleft Mobile Guards' Strategies for Graph
Surveillance and Protection\textquotedblright\ by Virg\'{e}lot Virgile, University of Victoria, 2024.}}}
\author{G. MacGillivray\thanks{Funded by a Discovery Grant from the Natural Sciences
and Engineering Research Council of Canada, RGPIN-04459-2017,
RGPIN-03930-2020.}, C. M. Mynhardt$^{\dag}$, V. Virgile\\Department of Mathematics and Statistics,\\University of Victoria, Victoria, \textsc{Canada}\\{\small gmacgill@uvic.ca, kieka@uvic.ca, virgilev@uvic.ca\medskip}\\\medskip To our friend and colleague, Odile Favaron}
\maketitle

\begin{abstract}
An eternal dominating family of graph $G$ in the eviction game is a collection
$\mathcal{D}_{k}=\{D_{1},...,D_{l}\}$ of dominating sets of $G$ such that (a)
$|D_{i}|=|D_{j}|$ for all $i,j\in\{1,2,...,l\}$, and (b) for any $i\in
\{1,2,...,l\}$ and any $v\in D_{i}$, either all neighbours of $v$ belong to
$D_{i}$, or there are a neighbour $w$ of $v$ not in $D_{i}$ and an integer
$j\in\{1,2,...,l\}\setminus\{i\}$ such that $D_{i}\cup\{w\}\setminus
\{v\}=D_{j}$. The eviction number of $G$, denoted by $e^{\infty}(G)$, is the
smallest cardinality of the sets in such an eternal dominating family. 

We compare $e^{\infty}$ to the independence number $\alpha$. We show
that the ratio $\alpha/e^{\infty}$ is unbounded and construct an
infinite class of connected graphs for which $e^{\infty}/\alpha \approx 4/3$. As our
main result, we use Ramsey numbers to show that for any integer $k\geq1$,
there exists a function $f(k)$ such that any graph with independence number
$k$ has eviction number at most $f(k)$.

\end{abstract}

\section{Introduction}

\label{SecIntro}Graph protection involves the placement of mobile sensors on
the vertices of a graph $G$ to protect the vertices and edges of $G$ against either
single or longer sequences of \textquotedblleft events\textquotedblright%
\ occurring at the vertices or edges. We refer to these sensors as \textit{guards}, and
to the events as \textit{attacks}. A guard located on a vertex $v$ of a graph $G$
covers $v$ and all the neighbours of $v$; we say that $v$ is \textit{occupied} (by a
guard). A vertex without a guard is said to be \textit{unoccupied}. The initial challenge is to
find a (usually smallest) subset $S$ of occupied vertices of $G$ such that
every vertex of $G$ is either in $S$ or adjacent to a vertex in $S$, i.e., a
\textit{dominating set} of $G$. However, real-world systems seldom remain static. As
situations evolve, the once optimal dominating set of guards may face
unforeseen challenges. Imagine a situation where a sensor must be moved from
its original position due to maintenance needs, adverse weather conditions, or
other complications. The challenge then becomes: 

\begin{center}
\textquotedblleft How do we reposition this sensor without
\vspace{-4pt}
compromising the overall coverage?\textquotedblright\ 
\end{center}

This is the point where the movement of guards becomes relevant. We restrict
our investigation to attack sequences of arbitrary length. Each such problem, called an \textbf{eternal domination problem}, can be modelled as a two-player game, alternating between a defender and an
attacker: the defender chooses the initial configuration of guards as well as
each configuration following an attack, and the attacker chooses the locations
of the attack. We further restrict our attention to the \textbf{eviction
game}, which was introduced by Klostermeyer, Lawrence and MacGillivray in
\cite{klostermeyer2016dynamic}. 

In the eviction game, only vertices containing a guard may be attacked. In the standard version of \emph{the eviction
game}, which we will simply refer to as \emph{eviction}, the guards
start by choosing their opening configuration, which must induce a dominating set. We consider 
the case where at most one guard is located on each vertex. At each turn the
attacker selects a vertex $v$ on which there is a guard and the guard on $v$ responds by moving 
to an unoccupied neighbour; that is, a neighbour of $v$ on which there is no guard, if possible.  If each of 
the neighbours of $v$ is occupied, then we say that $v$ is \emph{surrounded}, and the guard 
on $v$ must stay put and cannot contribute to the dominating set for a time unit. 
We sometimes say that an attacked guard is \emph{evicted}.
Only the guard that is attacked is allowed to move to a neighbour. The guards win the game 
if they are able to maintain a dominating set in the graph after responding to each attack;
otherwise, the attacker wins. Assuming the guards move optimally, an 
\emph{eternal dominating set} of a graph $G$ (in the eviction game) is any opening configuration 
(a dominating set of $G$) from which they can defend any sequence of attacks on $G$. 
The \emph{eviction number} of a graph $G$, denoted by $e^\infty(G)$, is the minimum 
cardinality of an eternal dominating set of $G$ in the eviction game.

We focus on comparing the eviction number of $G$ to its independence number
$\alpha(G)$. As we show in Section \ref{SecDefs}, it is easy to see that the ratio
$\alpha/e^{\infty}$ is unbounded. On the other hand, it is not so easy
to determine whether the ratio $e^{\infty}/\alpha$ is bounded or not.
The cycle $C_{7}$ is an example of a graph whose eviction number exceeds its
independence number: $\alpha(C_{7})=3$ and $e^{\infty}(C_{7})=4$. We construct an 
infinite class of connected graphs for which $e^{\infty
}/\alpha \approx 4/3$. One of the difficulties one encounters when studying eviction is the anomaly that $e^{\infty}$ could \textbf{increase} upon the addition of an edge. We illustrate this in Section \ref{different}.
As our main result, we show in Section \ref{functionf} that, for any integer
$k\geq1$, there exists a function, which we denote by $f(k)$, such that any
graph with independence number $k$ has eviction number at most $f(k)$. We state some open problems in Section \ref{open}.

\section{Definitions and Background}

\label{SecDefs}Concepts not defined here can be found in any standard text on
graph theory, e.g. \cite{CL, West}. For further background on graph protection, see \cite{klostermeyer2016protecting, klostermeyer2020eternal}.

For any positive integers $n$ and $k$, let
$[n]$ denote the set $\{1,2,...,n\}$ and let $\binom{[n]}{k}$ denote the set
of $k$-subsets of $[n]$.

We consider finite simple graphs and, as usual, denote the vertex and edge
sets of a graph $G$ by $V(G)$ and $E(G)$, respectively. The \emph{open
neighbourhood} of $v\in V(G)$, denoted by $N(v)$, is the set of vertices that
are adjacent to $v$. These vertices are known as the \emph{neighbours} of $v$.
The \emph{closed neighbourhood} of $v$ is the set $N[v]=N(v)\cup\{v\}$.

The \emph{disjoint union} of a graph $G$ and a graph $H$, denoted by $G+H$, is the graph with vertex set $V(G) \cup V(H)$ and edge set $E(G) \cup E(H)$. We denote the disjoint union of $k$ disjoint copies of a graph $G$ by $k G$. The \emph{join} of a graph $G$ and a graph $H$, denoted by $G \vee H$, is the graph with vertex set $V(G) \cup V(H)$ and edge set $E(G) \cup E(H) \cup \{uv: u \in V(G), v \in V(H)\}$.

As stated above, we denote the independence number of $G$ by $\alpha(G)$.
Furthermore, we denote the clique number of $G$ by $\omega(G)$, the domination
number by $\gamma(G)$, the chromatic number by $\chi(G)$, and the clique
covering number (the minimum cardinality of a partition of $V(G)$ such that
each set in the partition induces a clique) by $\theta(G)$. Observe that
$\alpha(G)=\omega(\overline{G})$ and $\chi(G)=\theta(\overline{G})$ for 
any graph $G$.

Let $\mathcal{D}_{k}$ be the collection of all dominating sets of $G$ of fixed
cardinality $k$. For $D\in\mathcal{D}_{k}$, we imagine that there is a single guard located on
each vertex of $D$ and therefore we think of $D$ as a configuration of guards.
We say that a (not necessarily dominating) set $X$ \emph{protects} a vertex
$v$, or $v$ is \emph{protected }(by $X$), if $v$ or one of its neighbours is occupied by a member of $X$.

As explained in the introduction, each eternal domination problem can be
modelled as a two-player game, alternating between a \emph{defender} and an
\emph{attacker}: the defender chooses $D_{1}\in\mathcal{D}_{k}$ as well as
each $D_{i}$, $i>1$, while the attacker chooses the locations $r_{1}%
,r_{2},\ldots$ of the attacks; we say the attacker \emph{attacks} the vertices
$r_{i}$. Thus, the game starts with the defender choosing $D_{1}$. For
$i\geq1$, the attacker attacks $r_{i}$ and the defender \emph{defends against}
the attack by choosing $D_{i+1}\in\mathcal{D}_{k}$ subject to constraints that
depend on the particular game. The defender wins the game if they can
successfully defend the graph against any sequence of attacks, including
sequences that are infinitely long, subject to the constraints of the game;
the attacker wins otherwise. In other words, the attacker's goal is to force
the defender into a configuration of guards that is not dominating. These
 dynamic models of domination were first
defined and studied by Burger, Cockayne, Gr\"{u}ndlingh, Mynhardt, Van Vuuren
and Winterbach in \cite{BCG-S, BCG2}. In particular, they studied the eternal 
domination number $\gamma^\infty(G)$ of a graph $G$, which is the smallest number of guards
that can defend $G$ against arbitrary sequences of attacks on unguarded vertices, where a single guard must move to the attacked vertex.

The definitions below of eternal dominating families of a graph in the eternal domination and the eviction games illustrate the difference between the two protection models.

An \emph{eternal dominating family} of a graph $G$ (in the eternal domination game) is a collection of sets $D_1, D_2, D_3, \ldots, D_l$ of $G$ that satisfy the following properties.

\begin{enumerate}
	\item For any $i, j \in [l]$, $|D_i|=|D_j|$.
    
	\item For any $i \in [l]$ and any $w \in V(G)-D_i$, there exist $v \in D_i \cap N(w)$ and $j \in [l]-\{i\}$ such that $(D_i \cup \{w\})-\{v\} = D_j$.
\end{enumerate}

An \emph{eternal dominating family} of a graph $G$ (in the eviction game) is a
collection of dominating sets $D_{1},D_{2},D_{3},\ldots,D_{l}$ of $G$ that satisfy the
following properties.

\begin{enumerate}
\setcounter{enumi}{2}
\item For any $i, j \in[l]$, $|D_{i}|=|D_{j}|$. 

\item For any $i\in\lbrack l]$ and any $v\in D_{i}$, either $N[v]\subseteq
D_{i}$, or there exist $w\in N(v)-D_{i}$ and $j\in\lbrack l]-\{i\}$ such that
$D_{i}\cup\{w\}-\{v\}=D_{j}$.
\end{enumerate}

Item (2) implies that the $D_i$ are dominating sets whereas (4) does not; thus we have to specify this explicitly. We proceed by stating some results on eviction obtained by Klostermeyer, Lawrence and MacGillivray in
\cite{klostermeyer2016dynamic}.

\begin{proposition}[\cite{klostermeyer2016dynamic}]
If $k < |V(G)|$ guards can defend an arbitrarily long sequence of attacks in the eviction game on a graph $G$, then so can $k+1$ guards.  
\end{proposition}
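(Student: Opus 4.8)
The plan is to lift a winning strategy for $k$ guards to one for $k+1$ guards by carrying an ``extra'' guard on an unoccupied vertex while keeping the positions of the other $k$ guards synchronised with the $k$-guard strategy. Fix a winning strategy $\sigma$ for the $k$ guards and let $\mathcal{R}$ be the collection of configurations that arise when the defender plays according to $\sigma$; every member of $\mathcal{R}$ is a dominating set, and $\mathcal{R}$ is closed under ``apply $\sigma$'s response to an attack.'' I would have the $k+1$ guards maintain the invariant that their current configuration has the form $D\cup\{x\}$ with $D\in\mathcal{R}$ and $x\notin D$. Any such configuration is dominating, so if the invariant can be maintained forever the $k+1$ guards win. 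The base case is immediate: $\sigma$ prescribes an initial dominating set $D_{1}$ with $|D_{1}|=k<|V(G)|$, so we may pick any $x\in V(G)\setminus D_{1}$ and start from $D_{1}\cup\{x\}$.

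Next I would describe the response to an attack on $D\cup\{x\}$, splitting on whom the attacker evicts. (i) If the attacker evicts the extra guard on $x$: if $x$ is surrounded it stays and nothing changes; otherwise move it to any unoccupied neighbour $y$, giving $D\cup\{y\}$, which still has the required form since $y\notin D$. (ii) If the attacker evicts the guard on $v\in D$: consult $\sigma$'s response to the attack on $v$ from $D$. If $\sigma$ leaves the guard on $v$ — which happens exactly when $N(v)\subseteq D$ — then $v$ is also surrounded in $D\cup\{x\}$, so our guard stays and the configuration is unchanged. Otherwise $\sigma$ moves the guard to some $w\in N(v)\setminus D$ and $D':=(D\cup\{w\})\setminus\{v\}\in\mathcal{R}$. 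If $w\neq x$, then $w$ is unoccupied in $D\cup\{x\}$, so we mimic $\sigma$ and move the guard on $v$ to $w$, reaching $D'\cup\{x\}$, again of the required form. The only remaining case is $w=x$: then either $v$ is surrounded in $D\cup\{x\}$ (so our guard stays and nothing changes), or $v$ has an unoccupied neighbour $w'\notin D\cup\{x\}$, to which we move the guard on $v$; the new configuration is $(D\setminus\{v\})\cup\{x\}\cup\{w'\}$, and since $w=x$ this equals $D'\cup\{w'\}$ with $w'\notin D'$, so the invariant survives with $w'$ playing the role of the extra guard.

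The main obstacle — and the only place the argument is not purely mechanical — is precisely this collision $w=x$, which is also why the naive ``add a guard and park it'' strategy does not obviously work: the extra guard sits on the very vertex to which $\sigma$ wants to send the evicted guard, and the rules forbid moving the extra guard on that turn. The resolution is to observe that because $x$ is occupied, the configuration has in effect ``already made $\sigma$'s move,'' in that $(D\setminus\{v\})\cup\{x\}=D'\in\mathcal{R}$; the evicted guard on $v$ is then either genuinely surrounded (harmless) or merely relocates to a fresh vertex $w'$, which we re-designate as the extra guard. Beyond this idea the write-up is just careful bookkeeping: verifying that a guard stays if and only if it is surrounded, that only the attacked guard moves, and that the vertices involved ($x$ versus $v$, $w$, $w'$, $y$) are distinct where needed, so that cardinalities stay at $k+1$ and membership in $\mathcal{R}$ is preserved throughout.
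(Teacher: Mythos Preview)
Your argument is correct. The paper does not give its own proof of this proposition; it merely cites it from \cite{klostermeyer2016dynamic}, so there is nothing to compare against. Your invariant ``current configuration equals $D\cup\{x\}$ with $D$ in the $k$-guard eternal family and $x\notin D$'' is exactly the right idea, and your case analysis---in particular the handling of the collision $w=x$ by observing that $(D\setminus\{v\})\cup\{x\}=D'$ already lies in the family, so the evicted guard either legitimately stays or becomes the new ``extra'' guard at some $w'$---closes the only nontrivial gap.

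One small polish: you phrase things in terms of a strategy $\sigma$ and then speak of ``$\sigma$'s response from $D$'' as if it depends only on $D$. Strictly this assumes $\sigma$ is positional. The cleanest fix, which your proof essentially already contains, is to drop $\sigma$ and work directly with an eternal dominating family $\mathcal{R}$ of $k$-sets (as in the paper's definition via properties (3)--(4)); your case analysis then shows that $\{\,D\cup\{x\}:D\in\mathcal{R},\ x\in V(G)\setminus D\,\}$ satisfies property (4), hence is an eternal dominating family of $(k{+}1)$-sets. Alternatively, in case B2a you may re-label the unchanged configuration as $D'\cup\{v\}$ (with $v$ now the extra guard) and advance the virtual $k$-guard game, which keeps a history-dependent $\sigma$ consistent. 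Either tweak is routine; the substance of your proof stands.
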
  \label{Proposition:EvictionMoreGuards}

We now examine how the eviction number of a graph $G$ relates to some other parameters such as the domination number, the independence number and the clique covering number of $G$.

\begin{proposition} [\cite{klostermeyer2016dynamic}] \label{Proposition:EvictionBasicBounds}
	For any graph $G$, $\gamma(G) \leq e^\infty(G) \leq \theta(G)$.
\end{proposition}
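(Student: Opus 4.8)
The plan is to prove the two inequalities separately; the lower bound is immediate and the upper bound requires exhibiting a defender strategy. For $\gamma(G)\le e^\infty(G)$: by definition an eternal dominating family of $G$ in the eviction game is a collection of dominating sets, all of the same cardinality, and this common cardinality is $e^\infty(G)$ for an optimal family; since every dominating set has at least $\gamma(G)$ vertices, $e^\infty(G)\ge\gamma(G)$. (Equivalently, the opening configuration must dominate $G$.)

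For $e^\infty(G)\le\theta(G)$ I will describe a winning strategy for the defender using $t:=\theta(G)$ guards. Fix a minimum clique cover, i.e.\ a partition $V(G)=C_1\cup\cdots\cup C_t$ into cliques, and call a configuration $D$ a \emph{transversal} of such a cover if $|D\cap C_s|=1$ for every $s$. Two facts organize the argument: (i) every transversal of a clique cover of $G$ is a dominating set of size $t$, since each vertex of $G$ lies in some part whose unique representative is equal or adjacent to it; and (ii) the defender can maintain the property that the current configuration is a transversal of \emph{some} minimum clique cover of $G$. The opening configuration is any transversal of the fixed cover, which by (i) already dominates $G$, and the substance of the proof is establishing (ii).

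To verify (ii), suppose the configuration $D$ is a transversal of a minimum clique cover $C_1,\ldots,C_t$ and the guard on $v$ is attacked, where $v$ represents the part $C_s$. If $v$ is surrounded the guard stays and $D$, hence the cover, is unchanged. If $v$ has an unoccupied neighbour and $|C_s|\ge2$, move the guard to an unoccupied vertex $w\in C_s$ (one exists, as the other $|C_s|-1\ge 1$ vertices of this clique are unoccupied and adjacent to $v$); the result is a transversal of the same cover. In the remaining case $C_s=\{v\}$ and $v$ is not surrounded, so the guard is forced to move to an unoccupied neighbour $w$, which lies in some part $C_{s'}\ne C_s$. Here I re-cluster: replace the parts $\{v\}$ and $C_{s'}$ by the cliques $\{v,w\}$ and $C_{s'}\setminus\{w\}$, keeping all other parts. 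This is still a clique cover of $G$; since $\theta(G)=t$ it cannot have fewer than $t$ parts, so $C_{s'}\setminus\{w\}\ne\emptyset$ and it has exactly $t$ nonempty parts. Moreover the new configuration has exactly one guard in each new part: the guard just moved to $w\in\{v,w\}$, and the former representative of $C_{s'}$, which is distinct from $w$, lies in $C_{s'}\setminus\{w\}$. Thus (ii) is restored, and by (i) the configuration dominates $G$; iterating this wins against every attack sequence, and the family of all configurations so reached is finite, so it is a genuine eternal dominating family of size $t$.

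The only real obstacle is this last case: a single fixed clique cover does not suffice, because evicting a guard from a singleton part $\{v\}$ forces that part to be abandoned and can leave vertices undominated. The resolution — absorbing $v$ into the edge $\{v,w\}$ and compensating by shrinking the part from which $w$ came — is the key idea; the remaining verifications (that re-clustering preserves minimality and the transversal property, and that $C_{s'}\setminus\{w\}$ cannot be empty) are routine, and I anticipate no further difficulty.
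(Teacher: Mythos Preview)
The paper does not include its own proof of this proposition; it is quoted from \cite{klostermeyer2016dynamic} without argument, so there is nothing to compare your approach against. Your proof is correct: the lower bound is immediate, and your defender strategy for the upper bound---maintaining a transversal of a minimum clique cover, with the re-clustering trick $\{v\},\,C_{s'}\mapsto\{v,w\},\,C_{s'}\setminus\{w\}$ to handle attacks on singleton parts---is sound. One small simplification: you do not actually need minimality of the cover to conclude $C_{s'}\setminus\{w\}\ne\emptyset$, since $w$ is unoccupied and hence not the representative of $C_{s'}$, forcing $|C_{s'}|\ge 2$ directly.
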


Since $\alpha(G)$ is a lower bound on $\gamma^\infty(G)$ (see \cite{BCG2}) and belongs to the interval $[\gamma(G), \theta(G)]$, it is reasonable to ask whether $\alpha(G)$ is also a lower bound on $e^\infty(G)$. Observation \ref{Observation:EvictionNumberOne} shows that this is not the case. Indeed, while fixing $e^\infty(G)=1$, $\alpha(G)$ can be arbitrarily large. Hence the ratio $\alpha/e^\infty$ is unbounded.

\begin{proposition}
	Let $G$ be a graph with at least two universal vertices. Then $\left. e^{\infty }(G)=1\right..$ % ${e^\infty(G)=1}$.
\end{proposition}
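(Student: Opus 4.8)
The statement claims that if $G$ has at least two universal vertices, then $e^\infty(G)=1$. The plan is to exhibit an explicit eternal dominating family with singleton sets.

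First I would fix notation: let $u_1, u_2$ be two universal vertices of $G$ (so $N[u_i] = V(G)$ for $i=1,2$). I would then claim that the family $\mathcal{D}_1 = \{\{v\} : v \text{ is a universal vertex of } G\}$ — or, if one wants the smallest possible family, just $\{\{u_1\},\{u_2\}\}$ — is an eternal dominating family in the eviction game. Since a single universal vertex dominates $G$, every set in this family is a dominating set of cardinality $1$, so property (3) (equal cardinality) is immediate, and each $D_i$ is indeed dominating. The whole content is verifying property (4).

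Next I would check property (4) for the set $D_i = \{u_1\}$ (the argument for $\{u_2\}$ is symmetric). Take the unique guard, located on $v = u_1$. We must show that either $N[u_1] \subseteq \{u_1\}$ — which fails unless $G$ is trivial — or there is a neighbour $w \in N(u_1) - \{u_1\}$ and an index $j$ with $\{u_1\} \cup \{w\} - \{u_1\} = \{w\} = D_j$. Take $w = u_2$: since $u_1$ is universal and $u_1 \neq u_2$, we have $u_2 \in N(u_1)$, and $\{w\} = \{u_2\} \in \mathcal{D}_1$. (A small degenerate case to mention: if $|V(G)| = 1$ there are no attacks to defend, and if $|V(G)| \ge 2$ the two universal vertices are necessarily distinct, so $u_2$ is a genuine neighbour of $u_1$.) This verifies (4), so $\mathcal{D}_1$ is an eternal dominating family with sets of size $1$, giving $e^\infty(G) \le 1$; and trivially $e^\infty(G) \ge 1 = \gamma(G) > 0$ by Proposition \ref{Proposition:EvictionBasicBounds} (or simply because a dominating set of a nonempty graph is nonempty). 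Hence $e^\infty(G) = 1$.

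There is essentially no obstacle here: the proof is a one-move strategy description. The only subtlety worth spelling out is the game-semantic one — when the single guard on $u_1$ is attacked, it must move to an \emph{unoccupied} neighbour, and since $u_2$ carries no guard in the configuration $\{u_1\}$, moving to $u_2$ is legal (and $u_2$ is never "surrounded" in a one-guard configuration since it has the unoccupied neighbour $u_1$, etc.). So the strategy "whenever attacked, hop to the other universal vertex" defends forever, and one should phrase the proof either directly in terms of this strategy or in terms of the eternal-dominating-family conditions (3)–(4), both of which are equivalent and equally short.
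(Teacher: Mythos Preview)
Your proof is correct and takes essentially the same approach as the paper: the paper simply says ``move back and forth on the two universal vertices $u$ and $v$,'' which is exactly the strategy you describe and then repackage via conditions (3)--(4) of the eternal-dominating-family definition. Your write-up is more detailed (handling the degenerate case $|V(G)|=1$ and noting the lower bound $e^\infty(G)\ge 1$), but the underlying idea is identical.
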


\begin{proof}
	Let $u, v$ be two universal vertices of $G$. By moving back and forth on the vertices $u$ and $v$, one guard can dominate all of the vertices of $G$ at each time $t=1,2,3,\ldots.$
\end{proof}

\begin{obs} \label{Observation:EvictionNumberOne}
	Let $G$ be the join of the graph $K_2$ with the graph $\overline{K_m}$ $($see Figure $\ref{Figure:EvictionNumberOne})$. Then $\alpha(G)=m$ and $e^\infty(G)=1$.
    
\end{obs}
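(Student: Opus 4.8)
The statement has two parts, and I would establish each directly.

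\emph{Computing $\alpha(G)$.} Write the two vertices of the $K_2$ as $u,v$ and the vertices of the $\overline{K_m}$ as $w_1,\dots,w_m$. First I would note that $\{w_1,\dots,w_m\}$ is independent, since $\overline{K_m}$ has no edges, giving $\alpha(G)\ge m$. For the reverse inequality, observe that $u$ and $v$ are universal vertices of $G$ (each is adjacent to the other and to every $w_i$), so any independent set contains at most one of $u,v$, and if it contains either of them it can contain no $w_i$; hence every independent set has size at most $\max\{1,m\}=m$. Therefore $\alpha(G)=m$.

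\emph{Computing $e^\infty(G)$.} Since $u$ and $v$ are both universal in $G$, the equality $e^\infty(G)=1$ is immediate from the preceding proposition on graphs with two universal vertices. Alternatively, one checks directly that $\{u\}$ is an eternal dominating set in the eviction game: a single guard on $u$ dominates $G$; when it is attacked, all of its neighbours are unoccupied (there being only one guard), so it may legally move to $v$, which again dominates $G$, and any later attack is answered by moving back to $u$. Thus one guard defends every attack sequence, and since one guard is clearly necessary, $e^\infty(G)=1$.

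I do not anticipate any genuine obstacle here: both parts are elementary. The only point worth stating explicitly is that the attacked guard always has a legal move to an unoccupied neighbour, and this is automatic because the lone guard leaves every other vertex of $G$ unoccupied.
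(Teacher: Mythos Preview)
Your proposal is correct and matches the paper's approach exactly: the paper states this as an observation with no separate proof, intending it to follow immediately from the preceding proposition on graphs with two universal vertices, which is precisely what you invoke (and your direct verification of the one-guard strategy simply reproduces that proposition's proof).
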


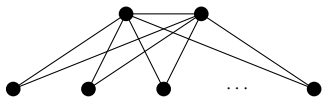
\begin{figure}[htbp]
	\centering
	\begin{tikzpicture}
		\begin{pgfonlayer}{nodelayer}
			\node [style=black-vertex] (0) at (-0.5, 0) {};
			\node [style=black-vertex] (1) at (0.5, 0) {};
			\node [style=black-vertex] (2) at (-2, -1) {};
			\node [style=black-vertex] (3) at (-1, -1) {};
			\node [style=black-vertex] (4) at (0, -1) {};
			\node [style=white-vertex] (5) at (1, -1) {$\cdots$};
			\node [style=black-vertex] (6) at (2, -1) {};
		\end{pgfonlayer}
		\begin{pgfonlayer}{edgelayer}
			\draw (0.center) to (2.center);
			\draw (2.center) to (1.center);
			\draw (1.center) to (6.center);
			\draw (6.center) to (0.center);
			\draw (0.center) to (3.center);
			\draw (3.center) to (1.center);
			\draw (1.center) to (4.center);
			\draw (4.center) to (0.center);
			\draw (0.center) to (1.center);
		\end{pgfonlayer}
	\end{tikzpicture}

	\caption{$K_2 \vee \overline{K_m}$}

	\label{Figure:EvictionNumberOne}
\end{figure}

It is not so easy to determine whether the ratio $e^{\infty}/\alpha$ is bounded or not. The cycle $C_{7}$ is an example of a graph with $\alpha=3$ and $e^{\infty}=4$. Therefore, disjoint unions of $C_{7}$ provide infinitely many (disconnected) graphs $G$ for which $e^{\infty}(G)/\alpha(G)=\frac{4}{3}$. To see that a similar result holds for connected graphs, consider the graph $G_k$ obtained by joining a new vertex $v$ to each vertex of $kC_7$, and a new vertex $w$ to $v$ (see Figure \ref{Figure:RatioLimit} for the case where $k=2$). The graph $G_k$ satisfies the properties described in the following proposition.

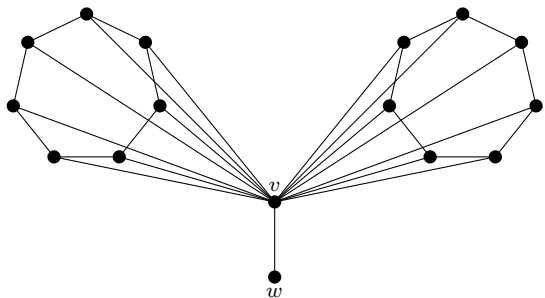
\begin{figure}[!htbp]
	\centering
	
	\begin{tikzpicture}
		\begin{scope}[rotate=-1*90/7]
			\foreach \x in {0, 1, ..., 6}
			{
				\draw ({\x*360/7}:1) -- ({(\x+1)*360/7}:1);
				\fill ({\x*360/7}:1) circle[radius=2.5pt];
			}
		\end{scope}
		
		\begin{scope}[shift={(5,0)}]
			\begin{scope}[rotate=-1*90/7]
				\foreach \x in {0, 1, ..., 6}
				{
					\draw ({\x*360/7}:1) -- ({(\x+1)*360/7}:1);
					\fill ({\x*360/7}:1) circle[radius=2.5pt];
				}
			\end{scope}
		\end{scope}
		
		\begin{scope}[shift={(2.5,-1.5)}]
			\fill[black] (0*360/7:0) circle[radius=2.5pt] node[above] {\scriptsize $v$};
		\end{scope}
		
		\begin{scope}[shift={(2.5,-2.5)}]
			\fill[black] (0*360/7:0) circle[radius=2.5pt] node[below] {\scriptsize $w$};
		\end{scope}
		
		\draw (2.5,-1.5)+(0*360/7:0) -- (2.5,-2.5)+(0*360/7:0);
		
		\foreach \x in {0, 1, ..., 6}
		{
			\draw ({-1*90/7+\x*360/7}:1) -- (2.5,-1.5)+(0*360/7:0);
		}
		
		\foreach \x in {0, 1, ..., 6}
		{
			\draw (5,0)+({-1*90/7+\x*360/7}:1) -- (2.5,-1.5)+(0*360/7:0);
		}
	\end{tikzpicture}	
	
	\caption{Graph $G_k$ for the case where $k=2$.}
	\label{Figure:RatioLimit}
\end{figure}

\begin{proposition} \label{4/3}
	For any $k \geq 1$, $\alpha(G_k)=3k+1$ and $e^\infty(G_k)=\theta(G_k)=4k+1$.
\end{proposition}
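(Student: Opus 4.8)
The plan is to prove the three claimed equalities in turn; the identity $e^{\infty}(G_k)=4k+1$ is the substantive part, the other two being short. For the independence number: $v$ is adjacent to every other vertex of $G_k$, so an independent set either equals $\{v\}$ or avoids $v$, in which case it is an independent set of $kC_7$ (of size at most $3k$) together with possibly $w$, which has no neighbour in $kC_7$; hence $\alpha(G_k)\le 3k+1$, and equality is attained by taking three independent vertices in each copy of $C_7$ and adjoining $w$. For the clique cover number: $\{v,w\}$ together with a clique partition of $kC_7$ into $k\,\theta(C_7)=4k$ cliques shows $\theta(G_k)\le 4k+1$; conversely, in any clique partition the clique $C_w$ containing $w$ lies inside $\{v,w\}$, and if $C_w=\{v,w\}$ the other cliques partition $kC_7$ and number at least $4k$, whereas if $C_w=\{w\}$ then the clique containing $v$ has the form $\{v\}\cup S$ with $S$ a clique of $kC_7$, so $|S|\le 2$ and $S$ meets only one copy, that copy minus $S$ is triangle-free on at least five vertices and needs at least three cliques, and each of the other $k-1$ copies needs four---so in both cases at least $4k+1$ cliques occur. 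Hence $\theta(G_k)=4k+1$, and Proposition~\ref{Proposition:EvictionBasicBounds} gives $e^{\infty}(G_k)\le\theta(G_k)=4k+1$.

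It remains to prove $e^{\infty}(G_k)\ge 4k+1$, i.e.\ that $4k$ guards cannot defend $G_k$. The guiding idea is that $N[v]=V(G_k)$, so every configuration with a guard on $v$ is dominating; hence the attacker aims to manufacture a non-dominating configuration at an instant when $v$ is unoccupied, treating $v$ as a trap in which a guard sits harmlessly. I will also invoke the known fact $e^{\infty}(C_7)=4$ in the sharpened form: from every dominating $3$-subset of $C_7$ the attacker can, in finitely many attacks confined to the cycle, force the three guards into a set that does not dominate $C_7$. The attack proceeds in three stages. \emph{Stage 1} reaches the position ``$v$ occupied, $w$ unoccupied'': if this already holds there is nothing to do; if $v$ is unoccupied then $w$ is occupied and a single attack on $w$ sends its guard to $v$; and if $v$ and $w$ are both occupied, then attacking $v$ forces its guard off $v$ and---since $w$ is occupied---into some copy of $C_7$, after which a further attack on $w$ puts a guard back on $v$. (If any forced configuration here already fails to dominate, the attacker has already won.) Now the remaining $4k-1$ guards lie in $kC_7$. \emph{Stage 2} produces a copy carrying exactly three guards: if some copy holds at most two guards, attack $v$---the evicted guard must move to $w$, for otherwise $w$ is undominated, so $v$ becomes unoccupied while that copy stays undominated, and the attacker wins; otherwise every copy holds at least three guards and, since $4k-1<4k$, some copy $C$ holds exactly three. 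Because $v$ is occupied, these three guards cannot leave $C$, so the attacker runs the $C_7$-strategy inside $C$ and drives them into a set that does not dominate $C$ (if those three already fail to dominate $C$, this is unnecessary). \emph{Stage 3} springs the trap: attack $v$; the evicted guard cannot remain on $v$, and every destination is fatal---moving it into a copy of $C_7$ leaves $w$ undominated, moving it to $w$ leaves $C$ undominated---so the attacker wins. This yields $e^{\infty}(G_k)\ge 4k+1$, whence $e^{\infty}(G_k)=\theta(G_k)=4k+1$.

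The step I expect to demand the most care is the confinement in Stage 2: one must check that while the attacker works inside $C$ no guard of $C$ can escape to $v$---which holds because $v$ is never attacked during that stage, so it stays occupied---and one must make precise that the attacker really does have a winning line inside a single $C_7$ from an arbitrary dominating triple. That last claim unwinds to the structure of dominating $3$-sets of $C_7$: up to rotation and reflection, the sizes of the runs of vacant vertices between consecutive guards are $(1,1,2)$ or $(0,2,2)$, and at most two well-chosen evictions create a run of size $3$ and hence a vertex of $C_7$ dominated by none of the guards. The remaining points---that the defender cannot evade the opening manoeuvres of Stage 1, and that the position at the end of Stage 1 has exactly $4k-1$ guards on $kC_7$, which is what powers the pigeonhole step of Stage 2---are routine.
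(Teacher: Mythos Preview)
Your proof is correct and follows essentially the same strategy as the paper's: reduce to the configuration ``$v$ occupied, $w$ unoccupied'', use pigeonhole to find a copy of $C_7$ with fewer than four guards, exploit $e^\infty(C_7)=4$ to leave that copy internally undominated, and then attack $v$ to force a loss. You supply considerably more detail than the paper does---explicit computations of $\alpha$ and $\theta$, the confinement argument in Stage~2, and the gap-pattern analysis for dominating triples of $C_7$---but the underlying idea is identical.
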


\begin{proof}
	The reader can easily verify that $\alpha(G_k)=3k+1$ and $\theta(G_k)=4k+1$. So, by Proposition \ref{Proposition:EvictionBasicBounds}, we only need to show that $e^\infty(G_k) \geq 4k+1$. Suppose the assumption is false. Then $G_k$ can be defended by $4k$ guards (by Proposition \ref{Proposition:EvictionMoreGuards}). Consider a configuration of these $4k$ guards on $G_k$. If $w$ is occupied and $v$ is unoccupied, then evict the guard on $w$ to $v$. Otherwise, $v$ and $w$ are both occupied; in this case, evict the guard on $v$ to an unoccupied vertex of one of the copies of $C_7$ and then evict the guard on $w$ to $v$. So, we may assume without loss of generality that $v$ is occupied and $w$ is unoccupied. Since there are $4k$ guards located on $G_k$, there is a copy of $C_7$, which we will refer to as $C_7^*$, on which are located fewer than four guards. Evict the guards located on $C_7^*$ in a way such that a vertex of $C_7^*$ is not dominated by any guard located on a vertex of this subgraph. Now, evict the guard located on $v$. If the guard moves to $w$, then a vertex of $C_7^*$ is not dominated. If the guard moves somewhere else, then $w$ is not dominated. This contradicts our assumption that $4k$ guards can defend the graph.
\end{proof}

However, $\alpha(G)$ is a lower bound on $e^\infty(G)$ when $G$ belongs to a specific graph class, as stated below.

\begin{proposition}[\cite{klostermeyer2016dynamic}]
 If $G$ is a triangle-free graph, then $e^\infty(G) \geq \alpha(G)$.   
\end{proposition}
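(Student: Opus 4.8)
The plan is to argue by contradiction. Suppose $G$ is triangle-free and admits an eternal dominating family $\{D_1,\dots,D_l\}$ in the eviction game with $|D_i| = k < \alpha(G)$ for all $i$. Fix a maximum independent set $I$, so that $|I| = \alpha(G) > k$. Every configuration $D$ that can occur in play satisfies $|D| = k < |I|$, so the set $I \setminus D$ of unoccupied vertices of $I$ is always nonempty, while $|D \cap I| \le k$ throughout. The attacker's aim is to force $|D \cap I|$ to keep increasing; since it can never exceed $k$, the attacker must instead be able to win at some finite stage, contradicting that the family is eternal.

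The key point, and the only place where triangle-freeness is used, is a forcing observation. Note first that a dominator of a vertex $x \in I$ necessarily lies outside $I$ (two vertices of $I$ are non-adjacent), so relocating such a dominator never decreases $|D \cap I|$. Note second that if $x \in I$ is unoccupied and $g$ is a dominator of $x$, then $g$ has the unoccupied neighbour $x$, so attacking $g$ forces its guard to move to some unoccupied neighbour $w$; and since $G$ is triangle-free, no neighbour of $g$ other than $x$ itself lies in $N[x]$. Hence: if the defender moves $g$ to a vertex $w \notin I$, then $w \notin N[x]$, so $x$ remains unoccupied and loses exactly one dominator; and if $x$ had $g$ as its only dominator, then the only move that keeps $x$ dominated is $w = x$, which puts a guard on a vertex of $I$.

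Using this, I would have the attacker play in phases. To open a phase, pick any unoccupied $x \in I$ and repeatedly attack a current dominator of $x$. At each such attack the defender must do one of the following: (a) move the attacked guard onto a vertex of $I$, which strictly increases $|D \cap I|$ and ends the phase; (b) fail to produce a dominating configuration, so the attacker wins; or (c) move the attacked guard to a vertex outside $I$, which by the forcing observation decreases the number of dominators of $x$ by one while leaving $x$ unoccupied. The number of dominators of $x$ is a nonnegative integer at most $k$, so (c) can recur only finitely often; and once $x$ has exactly one dominator, the forcing observation makes the next attack end in (a) or (b). Thus every phase terminates, either with a win for the attacker or with $|D \cap I|$ strictly larger than before. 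Since $I \setminus D$ is always nonempty, the attacker can always open a new phase, so $|D \cap I|$ would be forced above $k$ within at most $k+1$ phases unless the attacker wins first. This contradiction yields $e^\infty(G) \ge \alpha(G)$.

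I expect case (c) to be the step needing the most care: a priori the defender could try to shuffle guards forever among vertices outside $I$, keeping every unoccupied vertex of $I$ multiply dominated and never being forced to occupy one. The monovariant ``number of dominators of the fixed target $x$'' --- whose strict decrease in case (c) is exactly what triangle-freeness buys --- is what rules this out. That triangle-freeness is genuinely needed is shown by $K_2 \vee \overline{K_m}$, where $e^\infty = 1$ while $\alpha = m$.
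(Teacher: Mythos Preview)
The paper does not supply its own proof of this proposition: it is quoted from \cite{klostermeyer2016dynamic} and stated without argument, so there is nothing in the present paper to compare your attempt against.

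Evaluated on its own merits, your argument is correct. The two ingredients are exactly the right ones: (i) the attacker only ever attacks guards in $N(x)$ for some currently unoccupied $x\in I$, and such guards lie outside $I$, so $|D\cap I|$ is monotone non-decreasing under the attacker's strategy; (ii) triangle-freeness forces any move of such a guard to a vertex $w\neq x$ to land outside $N[x]$, so the count of dominators of the fixed target $x$ strictly drops in case~(c). These two monovariants together terminate each phase (the inner counter cannot go below $1$ without the defender losing or being forced into case~(a)) and terminate the whole process (the outer counter cannot exceed $k$). One cosmetic point: your sentence ``a dominator of a vertex $x\in I$ necessarily lies outside $I$'' is literally false when $x$ itself is occupied, but since you only ever apply it to unoccupied $x$ this does not affect the argument.
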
 

We now give the values of $e^\infty$ for paths, cycles and complete bipartite graphs.

\begin{proposition} [\cite{klostermeyer2016dynamic}] \label{Proposition:EvictionBasicFamilies}
	For any integers $n, m \geq 1$,
	\begin{enumerate} [label=(\roman*), left=0pt, rightmargin=0pt, topsep=0pt, itemsep=0pt]
		\item $e^\infty(P_n)=\lceil \frac{n}{2} \rceil$,
		\item $e^\infty(C_3)=1$, $e^\infty(C_5)=2$ and $e^\infty(C_n)=\lceil \frac{n}{2} \rceil$ for any $n \neq 3, 5$,
		\item $e^\infty(K_{m, n})=\max \{m, n\}$.
	\end{enumerate}
\end{proposition}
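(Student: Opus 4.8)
The plan is to establish each of the three families separately, in each case proving an upper bound by exhibiting an eternal dominating family in the eviction game and a matching lower bound via Proposition~\ref{Proposition:EvictionBasicBounds} or a direct attack argument. For paths, the upper bound $e^\infty(P_n)\le\lceil n/2\rceil$ follows from $e^\infty(G)\le\theta(G)$, since a path on $n$ vertices is covered by $\lceil n/2\rceil$ edges (cliques). For the lower bound I would use $e^\infty(P_n)\ge\gamma(P_n)=\lceil n/3\rceil$ — but that is too weak, so instead I would argue directly: with fewer than $\lceil n/2\rceil$ guards on $P_n$, some pair of consecutive guards is at distance at least $3$ along the path (or a guard is near an endpoint with slack), and the attacker repeatedly evicts the guard nearest a chosen "long gap" or endpoint, forcing it away from the region it must dominate; after finitely many attacks a vertex in the gap or at an endpoint is undominated. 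The bookkeeping here is the first place care is needed: one must check the evicted guard genuinely has nowhere safe to go.

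For cycles, $e^\infty(C_n)\le\theta(C_n)=\lceil n/2\rceil$ for even $n$ and $\lceil n/2\rceil$ for odd $n\ge 7$ (cover by edges plus possibly one singleton), with the exceptional small cases handled by hand: $C_3$ has a universal-type domination by a single guard moving among the three vertices, and $C_5$ is dominated by two adjacent guards that can shuffle — here one checks directly that $2$ guards suffice on $C_5$ but the generic clique-cover bound $\lceil 5/2\rceil=3$ is not tight, so $C_5$ must be treated as a genuine special case (and likewise one must verify $C_7$ really needs $4=\lceil 7/2\rceil$, not fewer, which is exactly the $\alpha=3 < e^\infty=4$ phenomenon already noted in the introduction). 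The lower bound $e^\infty(C_n)\ge\lceil n/2\rceil$ for $n\neq 3,5$ is again a direct attack argument: with $\lceil n/2\rceil-1$ guards on the cycle there is an arc of three consecutive unoccupied vertices, or two such arcs, and the attacker drives a guard out of position; the parity of $n$ matters because for odd $n$ no perfect "alternating" configuration exists, which is precisely why $C_5$ escapes.

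For complete bipartite graphs $K_{m,n}$ with parts $A$ (size $m\le n$) and $B$ (size $n$), the upper bound $e^\infty(K_{m,n})\le n$ comes from $\theta(K_{m,n})=n$ (one edge per vertex of $B$, covering all of $A$ with those that meet $A$). One checks that the configuration consisting of $B$ itself is dominating and forms an eternal dominating family: attacking a guard on $b\in B$ either lets it move into an unoccupied vertex of $A$ (when $m$ is large enough there is one) or, if $A$ is fully occupied, $b$ is surrounded and simply stays; in all cases $A\cup B$ stays dominated because $B$ alone dominates everything. For the lower bound, $e^\infty(K_{m,n})\ge\gamma(K_{m,n})$ is only $2$ (or $1$), far too weak, so instead: with at most $n-1$ guards, if $B$ is not fully occupied then any guard on $A$ dominates all of $B$, but there must then be an unoccupied vertex of $A$ as well (since $m\le n$ and total guards $\le n-1$)... the clean argument is that $n-1$ guards leave some vertex of $B$ unoccupied and, after attacking any guard sitting on $A$, it moves to $B$ (its only non-$A$ neighbours), and repeating this we can force all guards off $A$ while keeping some $B$-vertex unoccupied, or force a guard off $B$ leaving two $B$-vertices unoccupied and an undominated vertex. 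The main obstacle across all three parts is the lower-bound attack arguments: unlike eternal domination, in eviction the attacker must find a vertex to attack whose guard is forced into a strictly worsening move, and verifying that such a "pressure point" always exists below the claimed threshold — and that no clever reshuffling by the defender escapes it — requires a careful invariant, e.g. tracking the number of "slack" gaps or the size of $A\cap(\text{occupied})$ and showing it monotonically degrades under the right attack sequence.
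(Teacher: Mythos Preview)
The paper does not prove this proposition at all; it simply cites it from \cite{klostermeyer2016dynamic}. So there is no ``paper's own proof'' to compare against, and your task is really to give a self-contained argument.

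Your upper bounds are fine: invoking Proposition~\ref{Proposition:EvictionBasicBounds} via $\theta(P_n)=\theta(C_n)=\lceil n/2\rceil$ and $\theta(K_{m,n})=\max\{m,n\}$ is exactly the right move. (Your hand-rolled strategy for $K_{m,n}$, placing all guards on the larger side $B$, is not in fact safe: once the attacker pushes $m$ guards into $A$ and then attacks a remaining $B$-guard, that guard is surrounded and, per the rules, does not contribute for a time unit, leaving $A$ undominated when $n=m+1$. But you do not need this strategy, since the $\theta$ bound already suffices.)

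The real gap is in your lower bounds. You repeatedly set up direct attack arguments and then concede they need ``careful bookkeeping'' or a ``monotone invariant'' without supplying one; for $K_{m,n}$ you trail off mid-argument. You have overlooked the proposition stated immediately before this one in the paper: for triangle-free $G$, $e^\infty(G)\ge\alpha(G)$. Since $P_n$, $C_n$ ($n\ge 4$), and $K_{m,n}$ are all triangle-free, this instantly gives $e^\infty(P_n)\ge\alpha(P_n)=\lceil n/2\rceil$, $e^\infty(K_{m,n})\ge\alpha(K_{m,n})=\max\{m,n\}$, and $e^\infty(C_n)\ge\lfloor n/2\rfloor$. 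That closes paths, complete bipartite graphs, and even cycles with no attack argument at all. What genuinely remains is the one-unit gap for odd cycles $n\ge 7$ (showing $\lfloor n/2\rfloor$ guards do not suffice) and the two small cases $C_3,C_5$; only there is a direct eviction argument actually needed, and it is short.
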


Although $\alpha(G)$ is neither an upper bound nor a lower bound on $e^\infty(G)$, Klostermeyer and MacGillivray show that $e^\infty(G)$ is bounded for the first three values of $\alpha$.

\begin{theorem} [\cite{klostermeyer2016dynamic}]  \label{small} If
\[
\alpha(G)=\left\{
\begin{tabular}
[c]{l}%
$1$\\
$2$\\
$3$%
\end{tabular}
\right.  \text{,\ then\ }e^{\infty}(G)\left\{
\begin{tabular}
[c]{l}%
$=1$\\
$\leq2$\\
$\leq5.$%
\end{tabular}
\right.
\]

\end{theorem}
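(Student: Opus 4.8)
The plan is to prove the statement for $\alpha(G) \le 3$ by a careful case analysis on the structure of $G$, exploiting the fact that a graph with small independence number has vertex set coverable by few cliques only if we are lucky, but in general has a very constrained complement. The three cases $\alpha(G)=1,2,3$ should be handled separately, in increasing order of difficulty. For $\alpha(G)=1$, the graph $G$ is complete, every single vertex dominates, and a lone guard can simply sit still (or move between two adjacent vertices), so $e^\infty(G)=1$; this is immediate from the definition of the eviction game since an attacked guard on a complete graph has every neighbour available unless $G=K_1$, in which case the attacked guard is surrounded but still dominates.

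For $\alpha(G)=2$, the first step is to understand the structure: $\overline{G}$ is triangle-free, so $\overline{G}$ is a disjoint union of bipartite-like pieces, and in particular $\theta(G)=\chi(\overline{G})$ could be large, so Proposition \ref{Proposition:EvictionBasicBounds} alone does not suffice. Instead I would exhibit an explicit eternal dominating family of size $2$. The idea: pick an edge $uv$ of $G$; if $\{u,v\}$ fails to dominate, there is a vertex $x$ nonadjacent to both, but then $\{u,v,x\}$ would be independent (if $u \not\sim v$ we would also be fine), contradicting $\alpha(G)=2$ — so in fact \emph{every} edge of $G$ is a dominating set, and moreover every vertex lies on an edge unless it is isolated. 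One then checks that the family of all $2$-subsets of $V(G)$ inducing an edge satisfies property (4): when a guard on $v$ in the pair $\{u,v\}$ is attacked, either $N[v] \subseteq \{u,v\}$ (so $v$ has degree $1$ and $\deg(u)$ large, and one argues directly), or $v$ has an unoccupied neighbour $w$, and $\{u,w\}$ is again an edge hence in the family. The main subtlety is handling degree-one vertices and making sure the guard is never stuck on a non-dominating configuration; since $\alpha(G)=2$ forces the non-neighbourhood of any vertex to be a clique, one shows the guard always has an escape to another dominating edge.

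For $\alpha(G)=3$, the target bound is $e^\infty(G) \le 5$, and this is where the real work lies. Here $\overline{G}$ is $K_4$-free, so by Ramsey-type or direct structural reasoning the complement decomposes in a controlled way, but $\theta(G)$ can genuinely exceed $5$ (indeed $C_7$ has $\alpha=3$, $\theta=4$, $e^\infty=4$, and one expects near-extremal examples). I would first reduce to the case where $G$ has no dominating clique of size $\le 5$ and no two universal vertices (else earlier propositions finish it), then analyze how a maximum independent set $\{a,b,c\}$ interacts with the rest: every other vertex is adjacent to at least one of $a,b,c$, partitioning $V(G) \setminus \{a,b,c\}$ according to which of $a,b,c$ it sees, and the "private" parts $V_a, V_b, V_c$ (vertices seeing only $a$, only $b$, only $c$ respectively) must each induce a clique — otherwise an independent edge there plus the other two of $\{a,b,c\}$ gives an independent set of size $4$. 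The eternal dominating family would be built from $\{a,b,c\}$ together with one or two guards patrolling the clique-structured overlap regions; the \textbf{main obstacle} is verifying property (4) across all attack patterns — in particular ensuring that when a guard covering one of the private cliques $V_a$ is evicted it can always slide to a vertex that still dominates $V_a$ while the overall configuration remains dominating. Given the delicacy already visible in the $C_7$ and $G_k$ arguments, I expect this to require a somewhat lengthy but elementary case split on which region is attacked and on the adjacencies among $V_a, V_b, V_c$ and the common-neighbour sets; this is presumably exactly the content of the Klostermeyer--MacGillivray proof being cited.
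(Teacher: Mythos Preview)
The paper does not prove Theorem~\ref{small}; it is quoted from \cite{klostermeyer2016dynamic} and used as a black box (only for the base cases $k=1,2,3$ in the proof of Theorem~\ref{Theorem:BoundEviction}). So there is no ``paper's own proof'' to compare against, and your proposal must stand on its own.

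Your $\alpha(G)=1$ case is fine. The $\alpha(G)=2$ argument, however, contains a genuine error. You claim that if $uv\in E(G)$ and $\{u,v\}$ fails to dominate some vertex $x$, then $\{u,v,x\}$ is independent; but $uv$ is an edge, so this set is never independent, and the inference collapses. Concretely, $P_4=a\,b\,c\,d$ has $\alpha=2$, yet the edge $\{a,b\}$ does not dominate $d$; so ``every edge is a dominating set'' is simply false. The follow-up claim that after an eviction the new pair $\{u,w\}$ is ``again an edge hence in the family'' is also unjustified: in $P_4$, from the dominating edge $\{b,c\}$ an attack on $c$ forces the guard to $d$, and $\{b,d\}$ is not an edge. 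The family you propose is neither a family of dominating sets nor closed under eviction moves. A workable route is to take as the eternal family the collection of \emph{all} dominating $2$-sets (which includes every maximum independent set) and argue, using that $V(G)\setminus N[v]$ is a clique whenever $\alpha(G)=2$, that an evicted guard can always land on a vertex that keeps the pair dominating; but this requires a real argument, not the one you gave.

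For $\alpha(G)=3$ you correctly identify the structural starting point (partitioning $V(G)\setminus\{a,b,c\}$ by adjacency to a maximum independent set $\{a,b,c\}$, with the ``private'' parts forced to be cliques), but what you have written is a plan, not a proof: the entire content lies in the case analysis you defer, and as the $\alpha=2$ misstep shows, the details here are easy to get wrong. Since the paper only cites the result, if you want a self-contained argument you will need to carry out that analysis in full or consult \cite{klostermeyer2016dynamic} directly.
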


It is unknown whether there exists a graph $G$ such that $\alpha(G)=3$ and $e^{\infty}(G)=5$. There are also no results in the literature bounding the eviction number in terms of the independence number when the latter is at least $4$.

Klostermeyer and MacGillivray \cite{klostermeyer2007eternal} proved that
$\gamma^{\infty}(G)\leq\binom{\alpha(G)+1}{2}$ for any graph $G$. Thus, the eternal domination number of a graph is bounded by a function of its independence number. Since no such bound is known in general for the eviction number of the graph, Klostermeyer, Lawrence and MacGillivray asked the following questions.

\begin{ques} [\cite{klostermeyer2016dynamic}] \label{Question:Eviction_c*alpha}
	Does there exist a constant $c$ such that $e^\infty(G) \leq c \alpha(G)$ for all graphs $G$?
\end{ques}

\begin{ques} [\cite{klostermeyer2016dynamic}]
	Does there exist a graph $G$ such that $\gamma^\infty(G)<e^\infty(G)$?
\end{ques}

The results in the next two sections are motivated by Question \ref{Question:Eviction_c*alpha}, which still remains unanswered. We aim to show the existence of a function $f$ such that any graph with independence number $k$ has eviction number at most $f(k)$. We begin by illustrating one of the difficulties we encountered when trying to determine the eviction number of a graph by considering its subgraphs.

\section{Eviction is Different}
\label{different}

For almost any domination-type parameter $\pi$, adding an edge to a graph $G$ can only result in a graph $G^{\prime}$ with $\pi(G^{\prime}) \leq \pi(G)$, and usually $\pi(G^{\prime}) \in \{\pi(G), \pi(G)-1\}$. This is not the case with the eviction number. 

Consider the graph $G \cong K_{1} + (K_{2} \vee \overline{K_{t}}),t\geq2$. Observe that $G$ has eviction number $2$ and the graph $G'$ obtained from $G$ by adding an edge from the isolated vertex of $G$ to one of the vertices of degree $t+1$ has eviction number $t+1$. The problem occurs when the attacker can force a guard to be surrounded.

This is trivially the case for $K_{1}$, but there are infinitely many graphs with this property. For example, the spider $\operatorname{Sp}(2;k)$, which is obtained from the star $K_{1,k}$ by subdividing each edge exactly once, has eviction number $k+1$. The attacker can force guards to be on the central vertex $c$ and all its neighbours.
Now, when $c$ is joined to a vertex of another graph, for example $K_{2}\vee\overline{K_{t}}$ (and again there are infinitely many examples), the eviction number of the resulting graph can be arbitrarily higher than the eviction number of $\operatorname{Sp}(2;k) + (K_{2}\vee\overline{K_{t}})$ (see Figure \ref{Figure:EvictionEdgeAdditionRemovalDisconnected}).

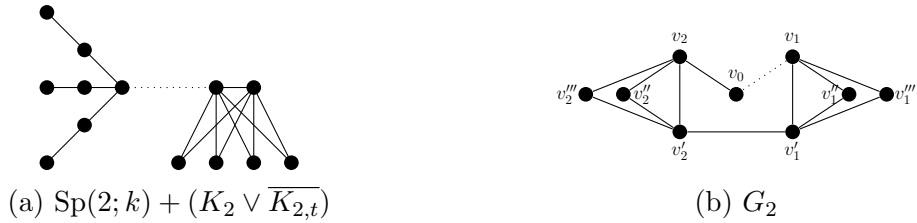
\begin{figure}[htbp]
	\centering
	\begin{subfigure}[b]{0.45\textwidth}
		\centering
		\begin{tikzpicture}
			\begin{pgfonlayer}{nodelayer}
				\node [style=black-vertex] (0) at (-1, 0) {};
				\node [style=black-vertex] (1) at (-1.5, 0.5) {};
				\node [style=black-vertex] (2) at (-2, 1) {};
				\node [style=black-vertex] (3) at (-1.5, 0) {};
				\node [style=black-vertex] (4) at (-2, 0) {};
				\node [style=black-vertex] (5) at (-1.5, -0.5) {};
				\node [style=black-vertex] (6) at (-2, -1) {};
				\node [style=black-vertex] (7) at (0.25, 0) {};
				\node [style=black-vertex] (8) at (0.75, 0) {};
				\node [style=black-vertex] (9) at (-0.25, -1) {};
				\node [style=black-vertex] (10) at (0.25, -1) {};
				\node [style=black-vertex] (11) at (0.75, -1) {};
				\node [style=black-vertex] (12) at (1.25, -1) {};
			\end{pgfonlayer}
			\begin{pgfonlayer}{edgelayer}
				\draw (7.center) to (9.center);
				\draw (9.center) to (8.center);
				\draw (8.center) to (10.center);
				\draw (10.center) to (7.center);
				\draw (7.center) to (11.center);
				\draw (11.center) to (8.center);
				\draw (8.center) to (12.center);
				\draw (12.center) to (7.center);
				\draw (7.center) to (8.center);
				\draw (0.center) to (3.center);
				\draw (3.center) to (4.center);
				\draw (0.center) to (1.center);
				\draw (1.center) to (2.center);
				\draw (0.center) to (5.center);
				\draw (5.center) to (6.center);
				\draw [style=dotted-edge] (0.center) to (7.center);
			\end{pgfonlayer}						
		\end{tikzpicture}

		\caption{$\operatorname{Sp}(2;k) + (K_2 \vee \overline{K_{2,t}})$}
		\label{Figure:EvictionEdgeAdditionRemovalDisconnected}	
	\end{subfigure}
	\hfill
	\begin{subfigure}[b]{0.45\textwidth}
		\centering
		\begin{tikzpicture}
			\begin{pgfonlayer}{nodelayer}
				\node [style=black-vertex] (0) at (-0.75, 0) {};
				\node [style=black-vertex] (1) at (0, -0.5) {};
				\node [style=black-vertex] (2) at (0.75, 0) {};
				\node [style=black-vertex] (3) at (-2, -0.5) {};
				\node [style=black-vertex] (4) at (-1.5, -0.5) {};
				\node [style=black-vertex] (5) at (-0.75, -1) {};
				\node [style=black-vertex] (6) at (0.75, -1) {};
				\node [style=black-vertex] (9) at (2, -0.5) {};
				\node [style=black-vertex] (10) at (1.5, -0.5) {};
				\node [style=white-vertex] (11) at (0.75, 0.25) {$v_1$};
				\node [style=white-vertex] (12) at (-0.75, 0.25) {$v_2$};
				\node [style=white-vertex] (13) at (0, -0.25) {$v_0$};
				\node [style=white-vertex] (14) at (0.75, -1.25) {$v_1'$};
				\node [style=white-vertex] (15) at (-0.75, -1.25) {$v_2'$};
				\node [style=white-vertex] (16) at (2.25, -0.5) {$v_1'''$};
				\node [style=white-vertex] (17) at (-2.25, -0.5) {$v_2'''$};
				\node [style=white-vertex] (18) at (-1.25, -0.5) {$v_2''$};
				\node [style=white-vertex] (19) at (1.25, -0.5) {$v_1''$};
			\end{pgfonlayer}
			\begin{pgfonlayer}{edgelayer}
				\draw (3) to (0);
				\draw (0) to (5);
				\draw (5) to (4);
				\draw (4) to (0);
				\draw (3) to (5);
				\draw (0) to (1);
				\draw (2) to (6);
				\draw (2) to (10);
				\draw (10) to (6);
				\draw (6) to (9);
				\draw (9) to (2);
				\draw [style=dotted-edge] (2) to (1);
				\draw (5) to (6);
			\end{pgfonlayer}
		\end{tikzpicture}
		
		\caption{$G_2$}
		\label{Figure:EvictionEdgeAdditionRemovalConnected}	
	\end{subfigure}

	\caption{Examples of graphs whose eviction number increases by the addition of one edge.}
	\label{Figure:EvictionEdgeAdditionRemoval}
\end{figure}

The graph $G_2$ in Figure \ref{Figure:EvictionEdgeAdditionRemovalConnected} is an example of a connected graph for which adding an edge increases the eviction number. In fact, three guards can defend the subgraph induced by the vertices $v_0, v_2, v_2', v_2'', v_2'''$ using the same strategy from the previous paragraph while one guard defends the subgraph induced by the vertices $v_1, v_1', v_1'', v_1'''$. However, the graph $G_2'$ obtained from $G_2$ by the addition of the edge $v_0v_1$ cannot be defended by four guards. To see this, suppose four guards are initially located on the vertices of $G_2'$. We may assume without loss of generality that the vertex $v_0$ is occupied since its neighbourhood is an independent set. We may further assume without loss of generality that there are two guards located on $v_2$ and $v_2'$, and one guard located on $v_1'$. After the sequence of attacks $v_0, v_1', v_1, v_2', v_2$, a vertex of $G_2'$ is not dominated.

A graph may have several vertices on which guards are surrounded at the same time. By Proposition  \ref{Proposition:EvictionBasicFamilies}, the complete bipartite graph $K_{r, r+s}$, where $s>0$, has eviction number $r+s$. The attacker can force $r$ guards to be located on the vertices of degree $r+s$ and $s$ guards to be located on the vertices of degree $r$; these $s$ guards are all surrounded.

The above paragraphs illustrate that, when determining the eviction number of a graph $G$ by considering how guards can defend various subgraphs of $G$, \textbf{it may be important to establish that the guards can always move within these specific subgraphs.}

\section{The Function $f$}
\label{functionf}
We begin our result on the function $f$ with the following definition.

The \emph{Ramsey number} $r(k, l)$ is the minimum integer $n$ such that any graph on at least $n$ vertices contains either an independent set of size $k$ or a clique of size $l$.

The special case of Ramsey's Theorem \cite{ramsey1929logic} applied to a graph and its complement ensures that the Ramsey number $r(k, l)$ is well defined. We use this theorem to prove the following lemma on which the proof of our main theorem depends.

\begin{lemma} \label{Lemma:MaxIndSets}
	For any integer $k \geq 1$, there exists a constant $c(k)=c_k$ such that if $G$ is a graph with independence number $k$ which has at least $c_k$ disjoint maximum independent sets, then $G$ can be defended by $k^2$ guards. Moreover, no guard is ever prevented from moving by being surrounded. 

\end{lemma}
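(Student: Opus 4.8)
The plan is to set up the constant $c_k$ as a Ramsey-type quantity large enough that, among $c_k$ disjoint maximum independent sets, we can always find a "clean core" of $k$ of them, $I_1,\dots,I_k$, with enough structural regularity that the $k^2$ guards stationed on $I_1\cup\cdots\cup I_k$ can shuttle around freely. The key point to exploit is that each $I_j$ has size exactly $k=\alpha(G)$, so $I_1\cup\cdots\cup I_k$ has exactly $k^2$ vertices; the guards will always occupy one full copy $I_j$ (a dominating set, being a maximum independent set in a graph of independence number $k$ — note a maximum independent set is dominating, since any vertex not dominated could be added to it). When a guard on a vertex $v\in I_j$ is attacked, it needs an unoccupied neighbour to move to; the natural candidate is the corresponding vertex in a different copy $I_{j'}$, provided $v$ is actually adjacent to some vertex of $I_{j'}$ and provided moving there yields a configuration that is again one of our stored dominating sets (or can be repaired to one).

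First I would recall from Ramsey's theorem that $r(k,\ell)$ is finite for all $\ell$, and define $c_k$ by iterating Ramsey bounds: roughly, I want $c_k$ disjoint maximum independent sets to guarantee $k$ of them, say $I_1,\dots,I_k$, such that the "reduced bipartite pattern" between any two of them is forced to be of a usable type. Concretely, for each pair $I_a,I_b$ and each vertex $v\in I_a$, whether $v$ has a neighbour in $I_b$ is a bounded amount of data; contracting each $I_j$ to a point and $2$-colouring pairs according to this finite pattern, a large enough family yields a monochromatic sub-family of size $k$ on which all pairwise patterns agree. Second, I would analyse that common pattern: since $\alpha(G)=k$ and each $I_j$ is a maximum independent set, for distinct $a,b$ no vertex of $I_a$ can be non-adjacent to \emph{all} of $I_b$ without enlarging an independent set — this forces every vertex of $I_a$ to have a neighbour in every other $I_b$ in the cleaned family (this is the crux and I expect it to take a short but careful argument). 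Third, with that established, define the guard strategy: guards always sit on some $I_j$; an attack on $v\in I_j$ is answered by moving the guard on $v$ to a neighbour of $v$ lying in some $I_{j'}$ that is currently unoccupied — since all other copies are unoccupied when the guards are exactly on $I_j$, and $v$ has a neighbour in each, such a move exists, and it lands the configuration on the dominating set $I_j\cup\{w\}\setminus\{v\}$; I would verify this set is again dominating (it contains $k-1$ vertices of a maximum independent set plus a neighbour of the removed vertex, and I can argue domination directly, or simply cycle among the pristine copies $I_1,\dots,I_k$ so that the configuration is always exactly one full $I_j$).

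The cleanest version of the strategy, which I would aim for, keeps the configuration \emph{always} equal to one of the $k$ sets $I_1,\dots,I_k$: when $v\in I_j$ is attacked, move its guard to the copy of the "same slot" in $I_{j+1 \bmod k}$ — but this requires the cleaned family to have a consistent bijective identification of vertices across copies under which slot-to-slot pairs are edges, which is exactly what the monochromatic Ramsey extraction can be arranged to give (colour by the full bipartite adjacency type, not just by incidence of a single vertex). Then every attacked guard has an unoccupied neighbour, so no guard is ever surrounded, and each post-attack configuration is a maximum independent set, hence dominating; this simultaneously gives both conclusions of the lemma. The main obstacle is making the Ramsey extraction yield this strong "uniform bipartite pattern across all pairs" — I would handle it by first passing to a sub-family where all $\binom{k}{2}$ pairwise patterns (as labelled bipartite graphs on $k+k$ vertices) are forced, then using $\alpha(G)=k$ to rule out the patterns that would leave some vertex with no neighbour in another copy, and finally checking that what remains admits the slot-bijection needed for the cyclic strategy.
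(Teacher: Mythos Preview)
Your plan contains a genuine gap in the guard strategy, not in the Ramsey extraction.  You oscillate between two incompatible readings of ``$k^{2}$ guards on $I_{1}\cup\cdots\cup I_{k}$''.  If the configuration is literally all of $I_{1}\cup\cdots\cup I_{k}$, then every one of the $k^{2}$ vertices is occupied and every attacked guard is surrounded inside the core, which kills the ``never surrounded'' clause and forces moves out of the structure you built.  If instead the configuration is a single $I_{j}$ (your ``cleanest version''), then you are using only $k$ guards, not $k^{2}$, and in any case in the eviction game \emph{only the attacked guard moves}: one eviction on $v\in I_{j}$ yields $(I_{j}\setminus\{v\})\cup\{w\}$ with $w\in I_{j'}$, which is not one of the $I_{t}$'s, so the configuration does not ``cycle among the pristine copies''.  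After a second attack you are at a set with $k-2$ vertices of $I_{j}$ and two stray vertices, and you have no argument that this dominates $G$, nor any invariant that prevents further drift.  The claim ``every $v\in I_{a}$ has a neighbour in every $I_{b}$'' is also not forced by $\alpha(G)=k$ alone: $I_{b}\cup\{v\}$ has $k+1$ vertices but need not be independent merely because $v$ has no neighbour in $I_{b}$ --- you also need $v\notin I_{b}$ \emph{and} $v$ nonadjacent to all of $I_{b}$; since the $I_{t}$ are disjoint this is fine, so that step is recoverable, but it does not rescue the strategy.

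The paper's proof uses the same Ramsey engine you describe (iterated $r(k+1,\cdot)$ on each ``coordinate''), but with two differences that make the strategy work.  First, it extracts $k+1$ disjoint maximum independent sets (not $k$) and labels them so that for each $j\in[k]$ the $j$-th column $C'_{j}=\{v_{1,j},\dots,v_{k+1,j}\}$ is a clique of size $k+1$.  Second, the $k^{2}$ guards are organised \emph{by column}, not by row: each $C'_{j}$ holds exactly $k$ guards, leaving one empty slot.  An attacked guard moves to the unique empty vertex of its own column, which is always a neighbour since $C'_{j}$ is a clique; this gives the ``never surrounded'' clause for free and preserves the invariant.  Domination then comes from pigeonhole: $k^{2}$ guards spread over $k+1$ rows of width $k$ force some row $R_{m}$ to be completely occupied (since $\lceil k^{2}/(k+1)\rceil=k$), and $R_{m}$, being a maximum independent set, dominates $G$.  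The pieces you are missing are exactly the ``$k+1$ rows'' and the ``move within a column-clique'' invariant; once you have those, your Ramsey extraction of a uniform slot-to-slot adjacency is precisely what is needed.
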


\begin{proof}
	Let $k \geq 1$ and let $l_0, l_1, l_2, l_3, \ldots, l_k$ be sufficiently large positive integers (which will be chosen later). Let $G$ be a graph with independence number $k$. Suppose $G$ has at least $c_k = l_0$ disjoint maximum independent sets $R_1, R_2, R_3, \ldots, R_{c_k}$, where $R_i=\{v_{i,1}, v_{i,2}, v_{i,3}, \ldots, v_{i,k}\}$ for each $i=1,2,3,\ldots,c_k$.

	If $l_0 \geq r(k+1, l_1)$, Ramsey's Theorem guarantees that there exist $l_1$ vertices in the set $\bigcup_{i \in [l_0]} \{v_{i,1}\}$ which induce a complete subgraph of $G$. Without loss of generality, let $C_1=\{v_{1,1}, v_{2,1}, v_{3,1}, \ldots, v_{l_1,1}\}$ be such a set.	
	If $l_1 \geq r(k+1, l_2)$, Ramsey's Theorem guarantees that there exist $l_2$ vertices in the set $\bigcup_{i \in [l_1]} \{v_{i,2}\}$ which induce a complete subgraph of $G$. Without loss of generality, let $C_2=\{v_{1,2}, v_{2,2}, v_{3,2}, \ldots, v_{l_2,2}\}$ be such a set. 
	Likewise, for each $j \in \{3, 4, \ldots, k-1\}$, if $l_{j-1} \geq r(k+1, l_j)$, Ramsey's Theorem guarantees that there exist $l_j$ vertices in the set $\bigcup_{i \in [l_{j-1}]} \{v_{i,j}\}$ which induce a complete subgraph of $G$.
	Finally, if $l_{k-1} \geq r(k+1, k+1)$, then there exist $l_k = k+1$ vertices in the set $\bigcup_{i \in [l_{k-1}]} \{v_{i,k}\}$ which induce a complete subgraph of $G$.
	To summarize, let: 	
	
	\vspace{-24pt}
	\begin{alignat*}{3}
		& l_k &&= k+1 \\
		& l_{k-1} &&= r(k+1, l_k) &&= r(k+1, k+1) \\
		& l_{k-2} &&= r(k+1, l_{k-1}) &&= r(k+1, r(k+1, k+1)) \\
		&&\vdots \\
		& l_0 &&= r(k+1, l_1) &&= \underbrace{r(k+1, r(k+1, r(k+1, \ldots)))}_{\text{$k$ times}}.
	\end{alignat*}

	As explained above, if $G$ has at least $c_k = l_0$ disjoint independent sets of size $k$, then $G$ has a subset of vertices $S^*=\bigcup_{i \in [k+1], j \in [k]} \{v_{i,j}\}$, where:
	\begin{enumerate}[label=(\roman*), align=left]
		\item $R_i = \bigcup_{j \in [k]} \{v_{i,j}\}$ is a maximum independent set for each $i \in [k+1]$,
		\item $C_j' = \bigcup_{i \in [k+1]} \{v_{i,j}\}$ is a clique of size $k+1$ for each $j \in [k]$.
	\end{enumerate}
	
	We can represent $S^*$ as an array with rows $R_i, i \in [k+1]$, and columns $C_j', j \in [k]$, as below:
	
	\vspace{-12pt}
	\[
	\begin{bmatrix}
		v_{1,1} & v_{1,2} & v_{1,3} & \cdots & v_{1,k} \\
		v_{2,1} & v_{2,2} & v_{2,3} & \cdots & v_{2,k} \\
		\vdots & \vdots & \vdots & \ddots & \vdots \\
		v_{k+1,1} & v_{k+1,2} & v_{k+1,3} & \cdots & v_{k+1,k}
	\end{bmatrix}.
	\]

	In this case, place $k$ guards in $C_j'$ for each $j \in [k]$ so that the subset $S^*$ contains exactly $k^2$ vertices. For any $j \in [k]$, if a guard in $C_j'$ is attacked, the guard can always relocate to the only unoccupied vertex in $C_j'$. Since there are exactly $k+1$ rows $R_1, R_2, R_3, \ldots, R_{k+1}$ and there are exactly $k^2$ guards located in $S^*=\bigcup_{i \in [k+1]} R_i$ at each time $t=1,2,3,\ldots$, by the Generalized Pigeonhole Principle, there is an integer $m \in [k+1]$ such that row $m$ contains at least $\lceil \frac{k^2}{k+1} \rceil=k$ guards; that is, all the vertices in $R_m$ are occupied. Since $R_m$ is a maximum independent (and hence dominating) set of $G$, all the vertices in $G$ are dominated by $R_m$. This completes the proof.
\end{proof}

Observe that our proof of Lemma \ref{Lemma:MaxIndSets} shows that $c_1 \leq r(2,2)=2$ and $c_2 \leq r(3, r(3,3))=18$.

We are now ready to prove our main theorem. 

\begin{theorem} \label{Theorem:BoundEviction}
	There exists a function $f$ such that if $G$ is
a graph with independence number $k\geq1$, then $e^{\infty}(G)\leq f(k)$. In
particular,
\[
f(1)=1\text{\ and}\ f(k)\leq\frac{2kc_{k}(k^{k-1}-1)}{k-1}\ \text{when}%
\ k\geq2,
\]
where $c_{k}$ is as in Lemma \ref{Lemma:MaxIndSets}.

\end{theorem}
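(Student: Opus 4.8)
The plan is to prove the theorem by induction on $k$, but with a strengthened inductive hypothesis that makes the recursion usable on induced subgraphs. For $k=1$ the graph is complete, so $\gamma(G)=\theta(G)=1$ and Proposition~\ref{Proposition:EvictionBasicBounds} gives $e^{\infty}(G)=1=f(1)$; concretely one guard shuttles between two vertices (or stays put if $|V(G)|=1$). For $k\ge 2$ I would prove the following strengthened claim: \emph{if $\alpha(G)=k$ then there is a set $W\subseteq V(G)$ with $|W|\le f(k)$ and a winning defender strategy in which every guard always sits on a vertex of $W$ and, whenever a guard is attacked, the defender can either move it to an unoccupied vertex of $W$ or the guard is already surrounded in $G$.} The point of the ``confined to $W$'' clause is exactly the phenomenon isolated in Section~\ref{different}: a guard running such a strategy inside a subgraph $H\subseteq G$ is never forced by the rules of the full game on $G$ to leave $V(H)$, so the sub-strategy can be run verbatim as part of a strategy on $G$.

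For the inductive step I would split into two cases according to how many pairwise disjoint maximum independent sets $G$ has. If it has at least $c_k$ of them, Lemma~\ref{Lemma:MaxIndSets} already furnishes a strategy with $k^{2}$ guards that stay inside the $(k+1)\times k$ array $S^{*}$ and move only within its columns $C_j'$, which are cliques of size $k+1$ carrying $k$ guards each; hence no guard is ever surrounded, $W=S^{*}$ works, and since $c_k$ is a Ramsey number we have $k^{2}\le 2kc_k\le f(k)$. Otherwise, fix a maximal family $R_{1},\dots,R_{t}$ of pairwise disjoint maximum independent sets, so $t\le c_k-1$, and put $U=R_{1}\cup\cdots\cup R_{t}$, so that $|U|\le (c_k-1)k$. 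By maximality $G-U$ contains no independent set of size $k$, hence $\alpha(G-U)\le k-1$, and the induction hypothesis yields a robust strategy for $G-U$ confined to some $W'\subseteq V(G-U)$ with $|W'|\le f(k-1)$ (the degenerate case $G-U=\varnothing$, i.e. $|V(G)|\le (c_k-1)k$, being absorbed into the base case).

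It remains to combine a defence of $U$ with the recursive defence of $G-U$ so that the occupied set dominates $G$ at all times: every vertex of $U$ is dominated from $U$ itself, and every vertex of $G-U$ is dominated by the $W'$-guards, which the inductive strategy keeps dominating in $G-U$. Attacks on $W'$-guards are answered by the inductive strategy, which never expels a guard from $W'\subseteq V(G-U)$. The crux is the remaining possibility: an attacked guard patrolling $U$ may be forced along an edge out of $U$ into $G-U$, temporarily vacating a vertex of $U$ and possibly landing among the $W'$-guards. I expect this interface --- showing that the defender can always restore a fully dominating copy of $U$ (for instance by routing each displaced guard straight back to its home vertex as soon as it is next attacked, while bounding how many guards can be displaced at once and controlling where they may go) without ever disrupting the $W'$-strategy --- to be the main obstacle, and it is precisely what the remark at the end of Section~\ref{different} is warning about. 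Making it work costs a bounded number of extra guards dedicated to $U$ together with several parallel copies of the recursive budget, which I would package as a recursion of the shape $f(k)\le 2kc_k+kf(k-1)$ (with the base value adjusted to fit); unrolling this recursion and bounding $c_{j}\le c_k$ and $k-j\le k$ at every level then yields the stated closed form $f(k)\le\frac{2kc_k(k^{k-1}-1)}{k-1}$.
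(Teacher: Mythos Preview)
Your outline shares the paper's opening move---split on whether $G$ has $c_k$ disjoint maximum independent sets, and if not, delete a bounded set to drop the independence number---but it stops short of the one idea that actually closes the argument. You yourself flag the gap: guards stationed on $U$ can be evicted into $G-U$, and you do not say how to prevent this leakage or repair it. The suggestion of ``several parallel copies of the recursive budget'' and a recursion $f(k)\le 2kc_k+kf(k-1)$ is not an argument: even with $k$ disjoint copies of a confined $W'$-strategy sitting inside $G-U$, there is no mechanism that stops the attacker from repeatedly evicting guards off $U$ onto vertices outside every copy of $W'$, and nothing that forces a displaced guard back to $U$ before domination of $U$ fails. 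A vertex $v\in U$ all of whose $U$-neighbours happen to be occupied and which has an unoccupied $G-U$-neighbour must leave $U$ when attacked, and your setup does not bound how many such evictions the attacker can string together.

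The paper does not patch this by induction at all. Instead it iteratively deletes sets $S_0,S_1,\dots$ until it reaches a level $l<k$ at which the remaining graph $G-S$ (with $S=\bigcup S_i$) has at least $c_l+|S|$ disjoint maximum independent sets; the extra $|S|$ of them is slack reserved for the next step. One then takes a smallest matching $M$ covering as many vertices of $S$ as possible, lets $S'$ be the $M$-partners lying in $G-S$, and notes that $G-(S\cup S')$ still has $c_l$ disjoint maximum independent sets, so Lemma~\ref{Lemma:MaxIndSets} defends it directly with $l^2$ guards that never leave it. The induced subgraph $G[S\cup S']$ is defended by the matching: one guard per edge of $M$ and one guard on each unmatched vertex (necessarily in $S$). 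An attacked guard on a matched vertex moves to its $M$-partner; an attacked guard on an unmatched vertex $z$ is either surrounded or has an unoccupied neighbour $x$ which, by maximality of $M$, is already $M$-matched to some $y$, and one replaces $xy$ by $xz$ in $M$. This invariant guarantees every guard in $S\cup S'$ always has a designated unoccupied destination \emph{inside} $S\cup S'$, so no guard ever leaks across the interface. That matching idea is exactly the missing piece in your proposal.
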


\begin{proof} 
	The cases $k=1, 2, 3$ are clear (see Theorem \ref{small}).
	Let $G$ be a graph such that $\alpha(G) = k \geq 4$. 
	We may assume that $|V| > f(k)$; otherwise, the theorem clearly holds for $G$.

	If $G$ has at least $c_k$ disjoint independent sets of size $k$, then, by Lemma \ref{Lemma:MaxIndSets}, $G$ can be defended by $k^2$ guards. So, we may further assume that $G$ has fewer than $c_k$ disjoint maximum independent sets. We first prove the following claim:

	{
		\setlength{\parskip}{0pt}
		\begin{quote}
			\begin{claim}
				There exist a positive integer $l < k$ and a subset $S$ of vertices of $G$ such that $\alpha(G-S)=l$ and $G-S$ has at least $c_l + |S|$ disjoint independent sets of size $l$.
			\end{claim}
			
			\bigskip
			
			\begin{proof}			
				Let $G_0, G_1, G_2, \ldots, G_{k-1}$ be a sequence of subgraphs of $G$ (where $G=G_0$) that satisfy the following conditions for each $i \in \{0, 1, 2, \ldots, k-2\}$:
				\begin{enumerate} [label=(\roman*), align=left]
					\item $\alpha(G_i) = k-i$.
					\item $G_{i+1} = G_i - S_i$, where $S_i$ is a smallest subset of vertices of $G_i$ such that $\alpha(G_i-S_i)= k-i-1$.

				\end{enumerate}
                \smallskip

				Since $G_0$ has fewer than $c_k$ disjoint independent sets of cardinality $k$, we have $|S_0| < k c_k$. 
                
				\smallskip
                
				If $G_{1}$ has at least $c_{k-1}+|S_{0}|$ disjoint independent sets of cardinality $k-1$, then we are done, hence suppose this is not the case. Then 
                \[
                |S_{1}| < (k-1)(c_{k-1}+|S_{0}|) < (k-1)(c_{k-1}+k c_k) < k^2 c_k.
                \]

				If $G_{2}$ has at least $c_{k-2}+|S_0|+|S_1|$ disjoint independent sets of cardinality $k-2$, then we are done, hence suppose this is not the case. Then 
                \[
                |S_{2}| < (k-2)(c_{k-2}+|S_0|+|S_1|) < (k-2)(c_{k-2}+k c_k+k^2 c_k) < k^3 c_k. 
                \]
                				                
				Likewise, for each $i \in \{3, 4, 5, \ldots, k-2\}$, if $G_{i}$ has at least $c_{k-i}+|S_0|+|S_1|+\cdots+|S_{i-1}|$ disjoint independent sets of cardinality $k-i$, then we are done, hence suppose this is not the case. Then 
                \begin{align*}
                  |S_{i}| & < (k-i)(c_{k-i}+|S_0|+|S_1|+\cdots+|S_{i-1}|) \\
                & < (k-i)(c_{k-i} + k c_k + k^2 c_k + \cdots + k^i c_k) \\
                & < k^{i+1} c_k.   
                \end{align*}

				Since $G$ is a graph of order at least $1 + \frac{2 k c_k (k^{k-1}-1)}{(k-1)}$, $G$ has a subset $S = \bigcup_{i=0}^{k-2} S_i$ of cardinality at most
                \begin{align*}
                 |S_0|+|S_1|+\cdots+|S_{k-2}| & < c_k (k + k^2 + k^3 + \ldots + k^{k-2} + k^{k-1}) \\
                 & = \frac{k c_k (k^{k-1}-1)}{k-1}   
                \end{align*}
                 such that $G-S$ is a graph on at least $1+|S|$ vertices with independence number $1$. This completes the proof of the claim.
				\renewcommand\qedsymbol{\ensuremath{\diamondsuit}}
			\end{proof}
		\end{quote}
	}

	\noindent Now, consider a smallest such subset of vertices $S$ such that $\alpha(G-S)=l < k$ and $G-S$ has at least $c_l + |S|$ disjoint independent sets of size $l$.

	Let $M$ be a smallest matching in $G$ that covers the largest number of vertices in $S$. Let $S'$ be the set of vertices in $G-S$ that belong to an edge of the matching $M$. Since $|S'| \leq |S|$, $\alpha(G-(S \cup S'))=l$ and $G-(S \cup S')$ has at least $c_l$ disjoint independent sets of size $l$. As a consequence of Lemma \ref{Lemma:MaxIndSets}, $l^2$ guards (and therefore at most $k^2$ guards) can defend $G-(S \cup S')$ and any guard that is evicted can always move to an unoccupied vertex in that subgraph.
	
	Since $|S \cup S'| < \frac{2 k c_k (k^{k-1}-1)}{k-1}$, in the rest of the proof, we will show that there exists a strategy with no more than $\frac{k c_k (k^{k-1}-1)}{k-1}$ guards to defend $G'$, the subgraph of $G$ induced by $S \cup S'$, where it is always possible for any guard located on $G'$ to move at each step of the game to a vertex of $G'$. Let the initial configuration of the guards be such that there is exactly one guard on each edge of $M$ and one guard on each vertex that is not covered by $M$, where such a vertex necessarily belongs to $S$. We will maintain the invariant that there is a guard on exactly one vertex of each edge of a matching $M$ that covers the largest number of vertices of $G'$ and one guard on each of the vertices that are not covered by the matching. The invariant is clearly initially satisfied. Suppose at some time $t=1,2,3,\ldots$ a guard located on $x_i$, a vertex that is covered by $M$, is attacked. Observe that the guard located on $x_i$ has at least one unoccupied neighbour $y_i$, which is matched to $x_i$ by $M$, since there is only one guard on each edge of the matching. Move the guard to its neighbour $y_i$. The invariant obviously still holds. Now, suppose a guard located on a vertex $z_i$ in $G'$ that is not covered by $M$ is attacked. Note that our choice of $M$ implies that $z_i \in S$. We consider two cases:
	
	\item Case $1$: If $z_i$ has no unoccupied neighbour in $G$, then there is nothing to do.
	
	\item Case $2$: If $z_i$ has an unoccupied neighbour $x_i$, then $x_i$ is covered by $M$ and therefore belongs to $G'$; otherwise we could find a matching that covers more vertices of $S$. Then, there exists $y_i \in S$ such that $x_i$ is matched to $y_i$ by $M$ and there is a guard on $y_i$. In this case, move the guard on $z_i$ to $x_i$ and consider the new matching $M'=(M \cup \{x_iz_i\}) \backslash \{x_i y_i\}$. The invariant clearly still holds.

Since $G$ is a graph on at least $1+\frac{2 k c_k (k^{k-1}-1)}{k-1}$ vertices, $V(G)$ can be partitioned into two sets $V_1$ and $V_2$ in a way such that at most $k^2 < \frac{k c_k (k^{k-1}-1)}{k-1}$ guards can effectively defend the subgraph of $G$ induced by $V_1$ and at most $\frac{k c_k (k^{k-1}-1)}{k-1}$ guards can effectively defend the subgraph induced by $V_2$. This completes the proof.
\end{proof}

\section{Open Problems}
\label{open}

As shown by Klostermeyer and MacGillivray in \cite{klostermeyer2016dynamic}, if $G$ is a graph with independence number $3$, then $G$ has eviction number at most $5$. However, it is unknown whether there exists a graph $G$ such that $\alpha(G)=3$ and $e^{\infty}(G)=5$. This naturally gives rise to the following question.

\begin{ques} \label{Question:Conclusion35}
	Does there exist a graph $G$ such that $\alpha(G)=3$ and $e^\infty(G)=5$?
\end{ques}

In Proposition \ref{4/3} we constructed an infinite class of connected graphs for which $e^{\infty}/\alpha \approx4/3$. We do not know of any graph for which $e^{\infty}/\alpha > 4/3$. The next question is more general than Question \ref{Question:Conclusion35}.

\begin{ques} 
	Does there exist a graph such that $e^{\infty}/\alpha > 4/3$?
\end{ques}

The upper bound on the function $f(k)$ in Theorem \ref{Theorem:BoundEviction} is much larger than $4/3$, the largest known value of $e^{\infty}/\alpha$, and almost certainly excessively large.

\begin{problem}
    (Substantially) improve the bound on the function $f(k)$ given in Theorem \ref{Theorem:BoundEviction}.
\end{problem}

A \textit{cograph} (or \textit{complement reducible graph}) is a graph that can be generated from the trivial graph $K_1$ by complementation and disjoint union. These graphs are also known under various characterizations, among which are the following.

\begin{proposition} [\cite{corneil1981complement,corneil1985linear}] \label{Proposition:CographP4}
\begin{enumerate} [label=(\arabic*)]
    \item A cograph is a graph that does not contain $P_4$ as an induced subgraph.
    \item A cograph is a graph that can be generated from the following operations:
    \begin{enumerate} [label=(\roman*), left=0pt, rightmargin=0pt, topsep=0pt, itemsep=0pt]
			\item $K_1$ is a cograph.
			\item If $G_1$ and $G_2$ are cographs, then so is $G_1 + G_2$.
			\item If $G_1$ and $G_2$ are cographs, then so is $G_1 \vee G_2$.
		\end{enumerate}
\end{enumerate}
		
\end{proposition}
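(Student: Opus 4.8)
The plan is to prove the two stated characterizations equivalent to the original definition of a cograph (a graph built from $K_1$ by complementation and disjoint union). Write $\mathcal{C}_0$ for the class in that definition, $\mathcal{C}_2$ for the class built from $K_1$ by disjoint union and join (item (2)), and $\mathcal{C}_1$ for the $P_4$-free graphs (item (1)); I will establish $\mathcal{C}_0 = \mathcal{C}_2 = \mathcal{C}_1$. I would first dispose of $\mathcal{C}_0 = \mathcal{C}_2$, which rests entirely on the De Morgan duality $\overline{G \vee H} = \overline{G} + \overline{H}$ and $\overline{G + H} = \overline{G} \vee \overline{H}$. For $\mathcal{C}_2 \subseteq \mathcal{C}_0$, the first identity rewrites $G \vee H = \overline{\,\overline{G} + \overline{H}\,}$, so a join of two graphs built from complementation and disjoint union is itself so built; a structural induction on the construction then places any member of $\mathcal{C}_2$ in $\mathcal{C}_0$. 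For $\mathcal{C}_0 \subseteq \mathcal{C}_2$ it suffices to check that $\mathcal{C}_2$ is closed under complementation, again by structural induction: $\overline{K_1} = K_1$, and the two identities turn a complemented union into a join (and a complemented join into a union) of complements that lie in $\mathcal{C}_2$ by hypothesis.

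Next, for the forward inclusion $\mathcal{C}_2 \subseteq \mathcal{C}_1$ (every cograph is $P_4$-free), I would induct on $|V(G)|$, the base case $K_1$ being trivial. For a disjoint union $G + H$, since $P_4$ is connected any induced copy lies inside a single component, hence inside $G$ or inside $H$, contradicting the inductive hypothesis. For a join $G \vee H$ I would exploit that $P_4$ is \emph{self-complementary}, so $P_4$-freeness is a complement-invariant property; then $\overline{G \vee H} = \overline{G} + \overline{H}$ is $P_4$-free by the disjoint-union case applied to the ($P_4$-free) complements, and therefore $G \vee H$ is $P_4$-free as well.

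The backward inclusion $\mathcal{C}_1 \subseteq \mathcal{C}_2$ (every $P_4$-free graph is a cograph) carries the real content, and it reduces to the classical structural lemma: \emph{every graph $G$ on at least two vertices for which both $G$ and $\overline{G}$ are connected contains an induced $P_4$}. Granting this, I induct on $|V(G)|$ for a $P_4$-free graph $G$: if $|V(G)| = 1$ then $G = K_1$, and otherwise the lemma (in contrapositive form) forces $G$ or $\overline{G}$ to be disconnected. If $G$ is disconnected it is the disjoint union of two proper induced subgraphs, each $P_4$-free and hence a cograph by induction, so $G \in \mathcal{C}_2$; if $\overline{G}$ is disconnected then $\overline{G} = H_1 + H_2$ gives $G = \overline{H_1} \vee \overline{H_2}$, a join of two proper $P_4$-free induced subgraphs, and again $G \in \mathcal{C}_2$.

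The main obstacle is the structural lemma itself. I would first observe that if $G$ has diameter at least $3$, a shortest path between two vertices at distance $3$ is an induced $P_4$; applying the same reasoning to $\overline{G}$ (which is connected, and $P_4$-free whenever $G$ is, since $P_4$ is self-complementary) reduces us to the case where both $G$ and $\overline{G}$ have diameter at most $2$. Starting from an induced $P_3$, say $a$–$b$–$c$ with $ac \notin E(G)$ (such a triple exists because a connected non-complete graph has two vertices at distance $2$), I would use connectedness and the diameter-$2$ bound in $\overline{G}$ to find a fourth vertex $d$ with $ad, bd \notin E(G)$; a short case analysis on whether $cd \in E(G)$ then exhibits an induced $P_4$ such as $a$–$b$–$c$–$d$, contradicting $P_4$-freeness. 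This lemma, due to Seinsche and to Corneil, Lerchs and Stewart Burlingham, is the crux; everything else is bookkeeping with the De Morgan identities.
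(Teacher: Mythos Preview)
The paper does not prove this proposition; it is stated as a cited result from \cite{corneil1981complement,corneil1985linear} with no accompanying argument, so there is no ``paper's own proof'' to compare against. Your proposal is the standard textbook route (De~Morgan duality for $\mathcal{C}_0=\mathcal{C}_2$, self-complementarity of $P_4$ for closure under join, and Seinsche's lemma for $\mathcal{C}_1\subseteq\mathcal{C}_2$), and the overall architecture is correct.

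One small caution on your sketch of the structural lemma: the ``short case analysis on whether $cd\in E(G)$'' is not quite as short as you suggest. When $cd\in E(G)$ you do get the induced path $a\text{--}b\text{--}c\text{--}d$, but when $cd\notin E(G)$ the four vertices $a,b,c,d$ do \emph{not} themselves induce a $P_4$ (you have $b$ adjacent to $a$ and $c$, and $\{a,c,d\}$ independent, which is a $K_{1,2}+K_1$), so a further vertex or an inductive step is needed. Since you explicitly attribute the lemma to Seinsche and to Corneil--Lerchs--Burlingham and flag it as ``the crux,'' this is acceptable as a proof plan, but be aware that a self-contained write-up of that lemma requires a slightly longer argument than your last paragraph indicates.
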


\smallskip
Virgile \cite{VV} showed that EVICTION ETERNAL DOMINATING SET is EXPTIME-complete and that the eviction number of cographs can be computed in polynomial time. Clearly, the same holds for the graphs listed in Proposition \ref{Proposition:EvictionBasicFamilies}.

\begin{problem}
Find further classes of graphs for which the eviction number
can be computed in polynomial time.
\end{problem}

\medskip

\noindent\textbf{Acknowledgement\hspace{0.1in}} We acknowledge the support of
the Natural Sciences and Engineering Research Council of Canada (NSERC), PIN 04459, 253271.

Cette recherche a \'{e}t\'{e} financ\'{e}e par le Conseil de
recherches en sciences naturelles et en g\'{e}nie du Canada (CRSNG), PIN 04459, 253271.
%BeginExpansion
\begin{center}
\includegraphics[width=2.5cm]{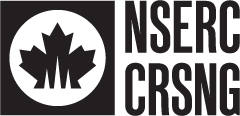}%
\end{center}
%EndExpansion

\end{document}